\newtheorem{theorem}{Theorem}
\newtheorem{lemma}{Lemma}
\newtheorem{remark}{Remark}
\theoremstyle{definition}
\newcommand\ecke{\mathop{\hbox{\vrule height 7pt width .3pt depth 0pt
\vrule height .3pt width 5pt depth 0pt}}\nolimits}
\newcommand{\Em}{E_{\mathrm{m}}}
\newcommand{\e}{\varepsilon}
\newcommand{\Ihd}{I_{h,\Delta}}
\newcommand{\gd}{g_{\Delta}}
\newcommand{\ydel}{y^\Delta}
\newcommand{\bdD}{\partial_\Delta}
\newcommand{\R}{\mathbb{R}}
\newcommand{\Y}{\mathcal{Y}}
\renewcommand{\d}{\mathrm{d}}
\newcommand{\cof}{\mathrm{cof}\,}
\renewcommand{\L}{\mathcal{L}}
\newcommand{\A}{\mathcal{A}}
\newcommand{\id}{\mathrm{Id}}
\renewcommand{\H}{\mathcal{H}}
\newcommand\wto{\rightharpoonup}
\title{The shape of low energy configurations of a   thin elastic sheet with
  a single disclination}
\date{\today}
\author[H. Olbermann] {Heiner Olbermann}
\address[Heiner Olbermann]{Universit\"at Leipzig, Germany}
\email{heiner.olbermann@math.uni-leipzig.de}
\begin{document}

\begin{abstract}
We consider a geometrically fully nonlinear variational model for thin elastic sheets that contain
  a single disclination. The free elastic energy contains the thickness $h$ as a
  small parameter. We give an improvement of  a recently proved energy scaling
  law,    removing the next-to leading order terms in the lower bound. Then we prove the convergence of (almost-)minimizers of the free
  elastic energy towards the shape of a radially symmetric cone, up to Euclidean
  motions, weakly in the spaces $W^{2,2}(B_1\setminus B_\rho;\R^3)$ for every
  $0<\rho<1$, as the thickness $h$ is sent to 0.
\end{abstract}

\maketitle

\section{Introduction}
\subsection{Setup and previous work}
The present article continues a program \cite{MR3148079,HOregular,2015arXiv150907378O} to explore  thin elastic sheets with a single disclination  from the variational point of
view. The free energy that we consider consists of two parts: First, the non-convex membrane energy, that
penalizes the difference between the metric that is induced by the deformation
and the reference metric, which is the metric of the (singular) cone. Second,
the bending energy, which penalizes curvature. The bending energy contains a factor $h^2$, where the small parameter
$h$ is to be thought of as the thickness of the sheet (see equation
\eqref{eq:26} below for the definition). Choosing the 
cone as configuration, one gets infinite energy: While the membrane term
vanishes, the bending energy is infinite for this choice. Energetically, there is a competition
of the membrane and the bending terms; neither will vanish  for
configurations of low energy.

\medskip

Intuitively, it seems quite clear how  configurations of low energy should look like:   
They should be identical to the cone far away from the disclination, and near
the disclination,  there should be
some smoothing of the cone, at a scale $h$ (the only length
scale in the problem). For such configurations, one gets an energy of
$C^*h^2\log \frac1h$ plus terms of order $h^2$, where $C^*$ is an explicitly
known constant, see Lemma \ref{lem:uuperbd} below. It is natural to conjecture
that such a scaling behavior should indeed hold true for minimizers. However, a
proof of an ansatz-free lower bound with the same scaling is much more difficult
than the straightforward construction for the upper bound. 
In the literature, lower bounds for this setting have been ansatz
based \cite{lidmar2003virus,PhysRevA.38.1005,MR3179439}, or have assumed radial
symmetry \cite{MR3148079}. 

\medskip 

The idea underlying the recent proofs of ansatz-free lower bounds
\cite{HOregular,2015arXiv150907378O} is to control the  Gauss curvature (or a
linearization thereof) by interpolation between the membrane and the bending
term energy. The control over the Gauss curvature allows for a certain control
over the Gauss map (or the deformation gradient). This information  in turn yields lower bounds for the
bending energy, using an inequality of Sobolev/isoperimetric type.
For the corresponding result from \cite{2015arXiv150907378O} see equation \eqref{eq:12}
below. This lower bound  does not quite
achieve the conjectured scaling behavior, in that there exist next-to leading
order terms $O(h^2\log\log \frac1h)$ which are not present in the upper bound.

\medskip

Here, we are going to improve the results from \cite{2015arXiv150907378O} in two ways:
First, we give an improved lower bound for the elastic energy, which proves the
conjecture that the minimum of the energy is  given by $C^*h^2\log\frac1h
+O(h^2)$. The  observation that allows for this improvement is that it is
unnecessary to use interpolation to control  Gauss curvature and Gauss
map (or rather, linearized Gauss curvature and deformation gradient). It is enough to use the membrane energy alone to obtain the necessary
control, and make more efficient use of the Sobolev/isoperimetric inequality.

\medskip

Second, we use this improved lower bound to show a  statement about
the \emph{shape} of configurations that satisfy the energy bounds. We prove that
(almost-)minimizers converge to the conical deformation, up to Euclidean
motions. It is remarkable that that much information about
deformations of small energy can be obtained, considering that we are
dealing with  a highly non-convex variational problem. Hitherto, such results
had only been achieved for situations in which the energy scales like $O(h^2)$
or less \cite{MR1916989,MR2128713,hornung2011approximation}.  The results of
these papers will also play an important role in our proof.

\medskip


\subsection{Statement of results}
Let $B_1:=\{x\in\R^2:|x|<1\}$ be the sheet in the reference configuration. 
The singular cone may be described by the mapping $\ydel:B_1\to\R^3$,
\[
  \ydel(x)=\sqrt{1-\Delta^2} x+\Delta |x|e_3\,.
\]
Here, $0<\Delta<1$ is the height of the singular cone, and is determined by the deficit of the disclination at the
origin. The reference metric on $B_1$ is given by
\[
\begin{split}
  \gd(x)= & D\ydel(x)^TD\ydel(x)\\
=& (1-\Delta^2)\hat x\otimes \hat x+\Delta^2\hat x\otimes \hat x\\
=&\id_{2\times 2}-\Delta^2 \hat x^\bot\otimes \hat x^\bot\,,
\end{split}
\]
where $\hat x=x/|x|$ and $\hat x^{\bot}=(-x_2,x_1)/|x|$. The induced metric of a deformation $y\in W^{2,2}(B_1;\R^3)$
is 
\[
g_y=Dy^TDy\,.
\]
The free elastic energy   $\Ihd:W^{2,2}(B_1;\R^3)\to \R$ is defined by
\begin{equation}
\Ihd(y)= \int_{B_1}\left(|g_y-\gd|^2+h^2|D^2y|^2\right)\d \L^2\,,\label{eq:26}
\end{equation}
where $\d \L^2 $ denotes 2-dimensional Lebesgue measure.
In the paper \cite{2015arXiv150907378O}, we proved the existence of a constant
$C=C(\Delta)>0$ such that
\begin{equation}
2\pi\Delta^2h^2\left(\log\frac1h-2\log\log\frac1h-C\right)\leq \min_{y\in
  W^{2,2}(B_1;\R^3)}I_{h,\Delta}(y)\leq 2\pi\Delta^2h^2\left(\log\frac1h+C\right)\,.\label{eq:12}
\end{equation}
Our first aim in the present article is to  improve the lower bound for the free
elastic energy. The improvement consists in getting rid of the $\log\log\frac1h$
terms on the left hand side:
\begin{theorem}
\label{thm:main1}
There exist positive  constants $C_1,C_2,C_3$ that only depend on $\Delta$ with the
following property: First,
\begin{equation}
2\pi\Delta^2h^2\left(\log\frac1h-C_1\right)\leq \min_{y\in
  W^{2,2}(B_1;\R^3)}I_{h,\Delta}(y)\leq 2
\pi\Delta^2h^2\left(\log\frac1h+C_2\right)\label{eq:22}
\end{equation}
for all small enough $h>0$. Furthermore, if 
$y$  satisfies 
\begin{equation}
\Ihd(y)\leq 2\pi\Delta^2h^2(\log\frac1h+C_2)\label{eq:20}\,,
\end{equation}
then
\begin{align}
  \int_{B_1\setminus B_R}|D^2y|^2\d\L^2\leq
  &2\pi\Delta^2\log{\frac{1}{R}}+ C_3 \quad  \text{ for all }R\in(2h,1)\,,\label{eq:19}\\
  \int_{B_1}|g_y-\gd|^2\d\L^2\leq & C_3 h^2\,.\label{eq:21}
\end{align}
\end{theorem}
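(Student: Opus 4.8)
The plan is to split the argument into the upper bound, the lower bound, and the two consequences \eqref{eq:19}--\eqref{eq:21} that come out of the lower bound proof as a byproduct. The upper bound in \eqref{eq:22} and \eqref{eq:20} is already essentially available: it is the right inequality in \eqref{eq:12}, or more precisely it follows from the explicit competitor construction that yields $C^*h^2\log\frac1h+O(h^2)$ (cf.\ the announced Lemma~\ref{lem:uuperbd}), with $C^*=2\pi\Delta^2$. So the real content is the lower bound, and I would prove it in the sharper form that simultaneously gives \eqref{eq:19} and \eqref{eq:21} for any $y$ satisfying \eqref{eq:20}.

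For the lower bound, following the strategy sketched in the introduction but implementing the announced improvement, I would proceed as follows. First, from \eqref{eq:20} one immediately reads off that the membrane term satisfies $\int_{B_1}|g_y-\gd|^2\,\d\L^2\le \Ihd(y)\le 2\pi\Delta^2 h^2(\log\frac1h+C_2)$; this is weaker than \eqref{eq:21} but is the starting point. The key point is that the membrane energy alone controls a linearized Gauss curvature: writing the induced metric defect $g_y-\gd$ and using that the Gauss curvature of $\gd$ (in the distributional sense) is a Dirac mass $-2\pi\Delta^2\delta_0$ (the disclination), one obtains that $\mathrm{curl}\,\mathrm{curl}\,(g_y-\gd)$ controls $\kappa_y + 2\pi\Delta^2\delta_0$ up to terms quadratic in $g_y-\gd$, where $\kappa_y$ is the Gauss curvature (or a suitable linearization) associated to $Dy$. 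Integrating this relation over annuli $B_r\setminus B_h$ and using the membrane bound, one gets that on most radii $r$ the circulation-type quantity $\int_{\partial B_r} \nu\cdot(\text{something built from }Dy)$ is close to the value it has for the cone, with an error controlled by $\big(\int_{B_1}|g_y-\gd|^2\big)^{1/2}\lesssim h(\log\frac1h)^{1/2}$. One then invokes a Sobolev/isoperimetric inequality on each good annulus to convert this information on the Gauss map into the pointwise lower bound $\int_{\partial B_r}|D^2y|^2 \ge \frac{2\pi\Delta^2}{r} - (\text{error})$, and integrates in $r$ from $2h$ to $1$. Being more efficient with the isoperimetric inequality — that is, not throwing away the sharp constant when passing from the curvature control to the bending integrand — is precisely what removes the $\log\log\frac1h$ loss that arose in \cite{2015arXiv150907378O} from the interpolation step. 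Integrating the pointwise bound over $(2h,1)$ gives $\int_{B_1\setminus B_{2h}}h^2|D^2y|^2 \ge 2\pi\Delta^2 h^2(\log\frac1{2h} - C) = 2\pi\Delta^2 h^2\log\frac1h - Ch^2$, which together with the nonnegativity of the membrane term yields the lower bound in \eqref{eq:22}.

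Once the lower bound is established in this pointwise-in-$r$ form, \eqref{eq:19} follows by the same integration but stopping the integral at radius $R$ instead of $2h$, i.e.\ one gets $\int_{B_R\setminus B_{2h}}|D^2y|^2 \le \frac{1}{h^2}\Ihd(y) - \int_{B_1\setminus B_R}|D^2y|^2$ combined with the already-proven lower bound on the complementary region; rearranging and using \eqref{eq:20} gives $\int_{B_1\setminus B_R}|D^2y|^2 \le 2\pi\Delta^2\log\frac1R + C_3$. Finally, \eqref{eq:21}: subtract the just-proven lower bound $h^2\int_{B_1}|D^2y|^2 \ge 2\pi\Delta^2 h^2\log\frac1h - Ch^2$ from the energy hypothesis \eqref{eq:20}, which leaves $\int_{B_1}|g_y-\gd|^2 \le (2\pi\Delta^2 C_2 + C)h^2 =: C_3 h^2$. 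In other words, \eqref{eq:21} is a direct corollary of the matching upper and lower bounds once the lower bound on the bending part is known to the precision $O(h^2)$.

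The main obstacle is the lower bound step, and within it the passage from "membrane energy controls a linearized Gauss curvature" to "the circulation of the Gauss map around $\partial B_r$ is essentially $2\pi\Delta^2$". The delicate points are: (i) making sense of $\kappa_y$ and the relevant Stokes/divergence identities for $y\in W^{2,2}$ only, which requires care with traces on circles $\partial B_r$ and a Fubini argument to select good radii; (ii) controlling the quadratic-in-$(g_y-\gd)$ error terms uniformly, which is where the bound $\int|g_y-\gd|^2\lesssim h^2\log\frac1h$ is used and where one must check the error is genuinely lower order than $h^2\log\frac1h$ after integrating in $r$; and (iii) extracting the sharp constant $2\pi\Delta^2$ from the isoperimetric inequality without loss, which is the whole improvement over \cite{2015arXiv150907378O}. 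I expect (iii), combined with the bookkeeping needed to keep every error at $O(h^2)$ rather than $O(h^2\log\log\frac1h)$, to be the crux.
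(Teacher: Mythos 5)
Your overall route — use the weak Hessian determinant identity to show that $\sum_i\det D^2y_i$ is close to $\pi\Delta^2\delta_0$ in a way controlled by the membrane term, convert this via the Sobolev/isoperimetric inequality (applied to each component $y_i$ separately, together with Jensen's inequality) into a pointwise-in-$r$ lower bound for $\int_{\partial B_r}|D^2y|^2$, pick good radii by Fubini, and integrate in $r$ — is the paper's route, and you correctly identify the improvement over \cite{2015arXiv150907378O} as ``skip the interpolation, use the membrane energy directly.'' One small remark: if you work with the linearized curvature $\sum_i\det D^2y_i$ (as the paper does) rather than the genuine Gauss curvature, the relevant identity \eqref{eq:1} is \emph{exact}, so the ``quadratic-in-$(g_y-\gd)$ error terms'' you worry about do not actually arise.

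There is, however, a genuine gap in your error bookkeeping, and it is the crux of the theorem. When you test $\sum_i\det D^2y_i-\pi\Delta^2\delta_0$ against the logarithmic weight $\Phi$ (whose second derivatives scale like $|x|^{-2}$) over $B_R\setminus B_{h_0}$ with $h_0\sim h$, Cauchy--Schwarz yields an error of size $\Em(y)^{1/2}/h$ in the lower bound for $\int|D^2y|^2$, where $\Em(y)=\int_{B_1}|g_y-\gd|^2\ud\L^2$. Plugging in the only a priori bound available, $\Em(y)\le 2\pi\Delta^2 h^2(\log\tfrac1h+C_2)$, gives an error of order $(\log\tfrac1h)^{1/2}$, i.e.\ a lower bound of the form $2\pi\Delta^2 h^2\bigl(\log\tfrac1h-C\sqrt{\log\tfrac1h}\bigr)$ — which is actually \emph{worse} than the $\log\log$ result you are trying to beat, since $\sqrt{\log\tfrac1h}\gg\log\log\tfrac1h$. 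Consequently your derivation of \eqref{eq:21} (``subtract the just-proven lower bound $\dots\ge 2\pi\Delta^2h^2\log\tfrac1h-Ch^2$'') is circular: the bending lower bound has $O(h^2)$ error only once you already know $\Em(y)\le Ch^2$. The missing idea is a bootstrap. Keep the error in the bending lower bound in the explicit form $C\Em(y)^{1/2}/h_0$, subtract that lower bound from the energy hypothesis \eqref{eq:20} to obtain the self-referential inequality $\Em(y)\le C\bigl(h^2+h\,\Em(y)^{1/2}\bigr)$, and close it with Young's inequality to get $\Em(y)\le Ch^2$. Only then does the error become $O(1)$ and the chain of deductions you describe for \eqref{eq:22} and \eqref{eq:19} go through.
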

As a consequence of Theorem \ref{thm:main1}, we will be able to prove
convergence of (almost)-minimizers of the functional \eqref{eq:26} towards the
singular cone as $h\to 0$:
\begin{theorem}
\label{thm:coneconv}
Let $y^h\in W^{2,2}(B_1;\R^3)$ be a sequence with $\Ihd(y^h)\leq 2\pi\Delta^2h^2(\log\frac1h+C_2)$. Then up to  Euclidean motions, we have for every $0<\rho<1$,

\begin{equation}
y^h\wto \ydel \quad\text{ in }W^{2,2}(B_1\setminus B_\rho;\R^3)\,.\label{eq:35}
\end{equation}
\end{theorem}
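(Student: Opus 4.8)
The plan is to use the energy estimates from Theorem \ref{thm:main1} to extract compactness, then identify the limit via a rigidity argument. First, fix $0<\rho<1$. By \eqref{eq:21} the membrane energy of $y^h$ tends to zero, so $g_{y^h}=Dy^{h,T}Dy^h\to\gd$ in $L^2(B_1)$; in particular $\|Dy^h\|_{L^4}$ is bounded on $B_1$ (since $|Dy^h|^2$ is controlled by $|g_{y^h}|$). On the annulus $B_1\setminus B_\rho$ the bending bound \eqref{eq:19} (applied with $R=\rho$) gives $\int_{B_1\setminus B_\rho}|D^2y^h|^2\,\d\L^2\le 2\pi\Delta^2\log(1/\rho)+C_3$, uniformly in $h$. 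Hence $y^h$ is bounded in $W^{2,2}(B_1\setminus B_\rho;\R^3)$ once we subtract affine maps to fix the Poincaré constant — here is where the "up to Euclidean motions" enters: we translate so that $\fint_{B_1\setminus B_\rho}y^h=0$ and rotate so that $\fint_{B_1\setminus B_\rho}Dy^h$ converges to a fixed matrix, using that the set of admissible gradients (near-isometries of $\gd$) is compact modulo $SO(3)$. Along a subsequence we then get $y^h\wto y$ in $W^{2,2}(B_1\setminus B_\rho;\R^3)$, and by a diagonal argument over $\rho\downarrow 0$ the limit $y$ is defined on all of $B_1$ and lies in $W^{2,2}_{\mathrm{loc}}(B_1\setminus\{0\};\R^3)$.

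Next I would identify the limit. Weak $W^{2,2}$ convergence gives strong $W^{1,p}$ convergence on each annulus for $p<\infty$ (compact Sobolev embedding in 2D), so $g_y=Dy^TDy=\lim g_{y^h}=\gd$ a.e.; thus $y$ is an isometric immersion of the cone metric $\gd$ on $B_1\setminus\{0\}$, of class $W^{2,2}_{\mathrm{loc}}$. The regularity/rigidity results invoked in the introduction (\cite{MR1916989,MR2128713,hornung2011approximation}) classify such $W^{2,2}$ isometric immersions: the metric $\gd=\id-\Delta^2\hat x^\bot\otimes\hat x^\bot$ is flat away from the origin but has a conical singularity, and a $W^{2,2}$ isometric immersion of a flat metric on an annulus is a developable surface; combined with the radial structure of $\gd$ and finiteness of the bending energy near the puncture (which forces the "cone angle" to be realized by the unique short generator configuration), the only such immersions are, up to a rigid motion of $\R^3$, the radially symmetric cone $\ydel$. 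This is the step I expect to be the main obstacle: ruling out non-radially-symmetric developable competitors and folded configurations, and showing that the bending-energy bound \eqref{eq:19} — which has exactly the constant $2\pi\Delta^2\log(1/R)$ matching the cone — is rigid enough to exclude them. One expects to argue that any deviation from the radial cone either violates the isometry constraint at the singularity or produces strictly more bending energy than $2\pi\Delta^2\log(1/R)+O(1)$ on small annuli, contradicting \eqref{eq:19}; the sharp constant in Theorem \ref{thm:main1} is precisely what makes this work.

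Finally, since the limit $y=\ydel$ (up to Euclidean motions) is independent of the subsequence, the whole sequence converges, giving \eqref{eq:35}. To summarize the order of steps: (i) use \eqref{eq:21} and \eqref{eq:19} to get uniform $W^{2,2}(B_1\setminus B_\rho)$ bounds after normalizing by Euclidean motions; (ii) extract a weakly convergent subsequence and pass to the limit in the membrane term to see the limit is a $W^{2,2}_{\mathrm{loc}}(B_1\setminus\{0\})$ isometric immersion of $\gd$; (iii) apply the rigidity classification of such immersions, together with the sharp bending bound near the tip, to conclude the limit is $\ydel$; (iv) upgrade subsequential convergence to full convergence. The delicate point, as noted, is step (iii) — controlling the behavior at the conical singularity and exploiting the sharpness of the improved lower bound.
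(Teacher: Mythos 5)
Your steps (i), (ii), and (iv) — uniform $W^{2,2}$ bounds on annuli via \eqref{eq:19} and \eqref{eq:21}, extraction of a diagonal subsequence converging weakly to a $W^{2,2}_{\mathrm{loc}}(B_1\setminus\{0\};\R^3)$ isometric immersion of $\gd$, and upgrading subsequential to full convergence — match the paper's argument. But step (iii), which you yourself flag as ``the main obstacle,'' is not a heuristic to be filled in; it is essentially the whole content of the theorem, and your sketch of it (``any deviation \dots produces strictly more bending energy'') does not point to a workable mechanism. The difficulty is that the class of $W^{2,2}$ isometric immersions of $\gd$ is large: by the developability theorem (Theorem \ref{thm:dev}), the limit $y^*$ is ruled by line segments, but nothing yet forces these rulings to pass through the origin, and a non-radial ruling pattern need not cost more bending energy at the level of the $O(1)$ constant in \eqref{eq:19}, which is all you control.

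The paper's actual mechanism has three ingredients you are missing. First, one transfers $y^*$ to the flat sector $B_{1,\Delta}$ via $\iota$, obtaining $Y\in\Wiso$, so that the developability theorem applies literally and the rulings become straight lines. Second, one applies the flattening operation of Lemma \ref{lem:flattening}: rulings not emanating from the origin (``bad'' segments) are killed by replacing $Y$ with an affine map on the far side of each such segment, producing $Y_\infty\in\Wiso$ with only radial rulings, \emph{without increasing} the (weighted) bending energy. The map $y^\infty=Y_\infty\circ j$ is then of the form $y^\infty(x)=x\gamma(x/|x|)$, and the explicit scaling identity \eqref{eq:29} combined with \eqref{eq:10} shows that the chain of inequalities in the Sobolev/isoperimetric estimate \eqref{eq:11} — which runs from $\pi\Delta^2$ back to $\pi\Delta^2$ — is saturated, forcing $|D^2y^\infty_i\cdot\hat x^\bot|$ to be constant on each circle and hence $\gamma+\gamma''=\mathrm{const}$, which pins $y^\infty$ down as a rigid motion of $\ydel$. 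Third — and this is a subtle point your sketch omits entirely — one must then argue that the flattening was trivial, i.e.\ $Y=Y_\infty$: since the good rulings of $Y_\infty$ foliate all of $B_{1,\Delta}$ and coincide with those of $Y$, and rulings are pairwise disjoint, $Y$ can have no bad rulings, so $y^*=y^\infty$. Without this last step you would only have identified the flattened limit, not the actual limit of $y^h$. So the proposal is correct as far as it goes, but the core rigidity argument — flattening, saturation of the isoperimetric inequality, and identification of $Y$ with $Y_\infty$ — is absent.
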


\subsection{Scientific context}
In the proof of Theorem \ref{thm:main1} we show  a certain focusing of the elastic energy near the
disclination. Phenomena with such elastic energy focusing  are also observed in many other settings. In
particular, crumpled elastic sheets display networks of vertices and ridges. The investigation of these ``sharp''
structures in the physics community started in the mid-1990's. For a historical
account and an exhaustive list of references see
the very recommendable
overview article by Witten \cite{RevModPhys.79.643}. There has been quite some
activity in the analysis of ridge-like structures in particular, see
\cite{1996PhRvE..53.3750L,PhysRevLett.87.206105,1997PhRvE..55.1577L,PhysRevLett.78.1303,Lobkovsky01121995,MR2023444}. Energy
focusing in conical shapes
has been investigated in
\cite{MR1447150,CCMM,PhysRevLett.80.2358,Cerda08032005}.  Disclinations in thin
elastic sheets are particularly interesting as a modeling device for icosahedral
elastic structures. This is a popular model for virus capsids
\cite{PhysRevA.38.1005,lidmar2003virus} or carbon
nanocones \cite{MR2033874}, the structure one obtains when inserting a single five-valent vertex
into a graphene sheet (of otherwise six-valent vertices). The disclinations  are located at  the vertices of
the elastic icosahedra.

\medskip

In the mathematical
literature on thin elastic sheets, there have been two strands of
investigation: 
On the one hand, there are the rigorous derivations of elastic plate models from
three-dimensional finite elasticity by means of $\Gamma$-convergence (see \cite{MR1916989,MR2210909,MR2731157}). On the other hand, there has been quite some  effort to 
investigate the qualitative properties of low-energy states in the variational
formulation of elasticity, obtained through an analysis of the  scaling of the
free elastic energy with respect to the relevant parameters in the model, see
e.g.~\cite{bella2014wrinkles,MR1921161,kohn2013analysis,MR3275221}. The
present paper belongs of course to the latter group. In more detail,
rigorous scaling laws similar to the ones we prove here have been derived for a single fold
\cite{MR2358334} and for the so-called \emph{d-cone}
\cite{MR3168627,MR3102597}. The variational problems considered in these
references however are of a very special kind: The constraints on the shape of
the elastic sheet are quite restrictive, and the lower bounds use these
constraints in an essential way (see \cite{2015arXiv150907378O} for a detailed discussion). This is not the case for our setting, whence
our method of proof, which we have developed in
\cite{HOregular,2015arXiv150907378O} and which we refine here, is completely
different.

\subsection{Connection to convex integration and rigidity results}

The Nash-Kuiper Theorem \cite{MR0065993,MR0075640} states that given a two-dimensional Riemannian
manifold $(M,g)$, a \emph{short}\footnote{An immersion $y:M\to \R^3$ is short
  with respect to the metric $g$ on $M$ if for every curve $\gamma:[0,1]\to M$, the
  length of $y\circ\gamma$ is shorter (measured with the Euclidean metric on
  $\R^3$) than $\gamma$ (measured with $g$).}
immersion $y_0:M\to \R^3$, and $\e>0$, there exists an isometric immersion
$y_1\in C^1(M;\R^3)$ such that $\|y_1-y_0\|_{C^0}<\e$. This is relevant in our
context, since the leading order term in the energy \eqref{eq:26} measures the
distance of the deformation $y$ from an isometric immersion with respect to the
target metric $\gd$. By the Nash-Kuiper Theorem, there exists a vast amount of deformations $y$ that have
arbitrarily small membrane energy. A priori, these are all good candidates for
energy minimization. One needs a principle that shows that
all of these deformations are associated with large bending energy. The energy
scaling law from Theorem \ref{thm:main1}  shows that none of these maps can beat
the upper bound construction energetically. Theorem \ref{thm:coneconv} shows the
``stronger'' statement
that maps with low energy cannot look anything like the approximations of
$C^1$ isometric immersions that appear in the proof of the Nash-Kuiper
Theorem.  

The Nash-Kuiper result is an instance of \emph{convex integration}, a concept
that has been  developed systematically by Gromov \cite{MR864505}. In
particular, the theorem states that solutions to isometric immersion problems
are highly non-unique if one requires only $C^1$-regularity. In stark contrast, there is the uniqueness in the Weyl Problem: Given a
sufficiently smooth metric $g$ on $S^2$ with positive Gauss curvature, there
exists a \emph{unique} isometric immersion $y:S^2\to\R^3$ of
$C^2$-regularity. Such uniqueness is often called \emph{rigidity}. The dichotomy
of convex integration versus rigidity also appears in other contexts, such as
the Monge-Amp\`ere equation \cite{lewicka2015convex} and the incompressible Euler equation \cite{constantin1994onsager,isett2016proof}.

Concerning the uniqueness of solutions in the Weyl problem, the proof is due to Pogorelov
\cite{MR0346714}. In fact, he proved that  solutions are unique up
to Euclidean motions in the class of immersions of \emph{bounded extrinsic
  curvature}. The latter is the class of immersions for which the pull-back of
the volume form on $S^2$ under the Gauss map is a well defined signed Radon
measure. For smooth maps, this is just the measure $K\d A$, where $K$ is the
Gauss curvature and $\d A$ the volume element. We see that control over the
Gauss curvature excludes constructions in the style of Nash-Kuiper. This is also
the basic concept underlying our proof (with the  modification that we consider
a linearized version of Gauss curvature). We believe that this hints at a link
between 
questions about rigidity of surfaces and variational problems in the theory of thin elastic
sheets.

\subsection*{Notation}
For a closed line segment $\{a+t(b-a):t\in [0,1]\}\subset \R^2$, we write
$[a,b]$. For a semi-closed line segment $\{a+t(b-a):t\in (0,1]\}\subset \R^2$,
we write $(a,b]$. 
Throughout the text, we will assume the deficit of the disclination $0<\Delta<1$
to be fixed.
A statement such as ``$f\leq Cg$'' is shorthand for
``there exists a constant $C>0$ that only depends on $\Delta$ such that $f\leq
Cg$''. The value of $C$ may change within the same line. 

For $r>0$, we let $B_r=\{x\in\R^2:|x|<r\}$. The two-sphere $\{x\in\R^3:|x|=1\}$
is denoted by $S^2$.

The 
one-dimensional Hausdorff measure is denoted by $\H^1$.
\subsection*{Acknowledgments}
The author would like to thank Stefan M\"uller for very helpful discussions.

\section{Proof of Theorem \ref{thm:main1}}
As in \cite{2015arXiv150907378O}, the proof of the energy scaling law rests on
two observations. First,  by the weak formulation of the Hessian determinant, 
\begin{equation}
\sum_{i=1}^3\det D^2 y_i=(y_{,1}\cdot y_{,2})_{,12}-\frac12
  (|y_{,1}|^2)_{,22}-\frac12(|y_{,2}|^2)_{,11} \quad \text{ for } y\in C^2(B_1;\R^3)\,,
  \end{equation}
we get that the quantity $\sum_{i=1}^3\det D^2 y_i$ is close to
$\sum_{i=1}^3\det D^2 \ydel_i=\pi\Delta^2 \delta_0$ in $W^{-2,2}$, where
$\delta_0$ denotes the distribution $f\mapsto f(0)$. The expression
$\sum_{i=1}^3\det D^2 y_i$ is best thought of as the ``linearized Gauss
curvature'': For a metric of the form $g_y=\id_{2\times 2}+\e G$, the Gauss
curvature is $K=\e \sum_{i=1}^3\det D^2 y_i+O(\e^2)$.
Second, the following Sobolev/isoperimetric inequality translates estimates for
integrals of the Hessian determinant into lower bounds for boundary integrals of
the tangential part of the second derivative: 
\begin{lemma}
\label{lem:isoper}
For $v\in C^2(\overline{B_1})$ and $0\leq r\leq 1$,
\begin{equation}
\int_{\partial B_r}|D^2 v|\d\H^1\geq \left(4\pi \left|\int_{B_r}\det
    D^2v\d x\right|\right)^{1/2}\,.\label{eq:6}
\end{equation}
\end{lemma}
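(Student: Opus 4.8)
The plan is to recognise the right-hand side of \eqref{eq:6} as a multiple of the square root of the \emph{signed} area enclosed by the image curve of $\partial B_r$ under the gradient map $\nabla v$, and then to bound that area by the length of this curve via the classical isoperimetric inequality, the length in turn being controlled by $\int_{\partial B_r}|D^2v|\,\d\H^1$. The case $r=0$ is trivial since both sides of \eqref{eq:6} vanish, so assume $0<r\le1$ and let $\gamma=(\gamma_1,\gamma_2)$ be the closed $C^1$ curve $\gamma(t)=\nabla v\bigl(r(\cos t,\sin t)\bigr)$, $t\in[0,2\pi]$.

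First I would rewrite the interior integral as a boundary integral. Since $\det D^2v=\det D(\nabla v)$ and $\nabla v\in C^1(\overline{B_1};\R^2)$, a standard divergence-theorem computation --- for $v$ merely $C^2$ one first approximates it by smooth functions, which is harmless because both sides of \eqref{eq:6} depend continuously on $v$ in $C^2(\overline{B_1})$; alternatively one invokes the degree formula $\int_{B_r}\det D(\nabla v)\,\d x=\int_{\R^2}\deg(\nabla v,B_r,y)\,\d y$ together with the fact that $\deg(\nabla v,B_r,\cdot)$ is the winding number of $\gamma$ --- yields
\[
\int_{B_r}\det D^2v\,\d x=\tfrac12\oint_{\gamma}\bigl(\gamma_1\,\d\gamma_2-\gamma_2\,\d\gamma_1\bigr)=:A(\gamma)\,,
\]
the signed area enclosed by $\gamma$. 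Next comes the length estimate: the arc-length element of $\gamma$ is $|\partial_\tau\nabla v|$, and since $\partial_\tau\nabla v$ is $D^2v$ acting on the unit tangent $\tau$ of $\partial B_r$ we have $|\partial_\tau\nabla v|\le|D^2v|$ pointwise on $\partial B_r$; hence
\[
L(\gamma)=\int_{\partial B_r}|\partial_\tau\nabla v|\,\d\H^1\le\int_{\partial B_r}|D^2v|\,\d\H^1\,.
\]

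The remaining ingredient is the isoperimetric inequality $|A(\gamma)|\le L(\gamma)^2/(4\pi)$, valid for \emph{every} closed rectifiable curve $\gamma$ in $\R^2$ with the signed area on the left; this is a short consequence of Hurwitz's Fourier-series argument (using $|n|\le n^2$ for $n\in\Z\setminus\{0\}$) and does not require $\gamma$ to be simple. Chaining the three steps,
\[
4\pi\Bigl|\int_{B_r}\det D^2v\,\d x\Bigr|=4\pi|A(\gamma)|\le L(\gamma)^2\le\Bigl(\int_{\partial B_r}|D^2v|\,\d\H^1\Bigr)^2\,,
\]
and taking square roots yields \eqref{eq:6}. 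There is no genuine obstacle; the two points that call for a little care are the identification of $\int_{B_r}\det D^2v\,\d x$ with the signed area $A(\gamma)$ (the mild regularity technicality just noted) and the observation that the isoperimetric inequality must be used in its signed, not-necessarily-simple form --- which is harmless, since that is precisely the form the Fourier argument delivers.
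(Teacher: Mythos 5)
Your proof is correct and is the standard argument for this sharp constant: identify $\int_{B_r}\det D^2v\,\d x$ with the signed area enclosed by the gradient curve $\gamma=\nabla v|_{\partial B_r}$ (via Stokes or the degree formula), bound the length of $\gamma$ by $\int_{\partial B_r}|D^2v|\,\d\H^1$ using $|D^2v\cdot\tau|\le|D^2v|$, and invoke the signed isoperimetric inequality $4\pi|A|\le L^2$. The paper itself defers the proof to \cite{2015arXiv150907378O}, which uses exactly this isoperimetric route, and you are right to flag that the signed, not-necessarily-simple form of the inequality is what is needed and that the Hurwitz/Fourier argument (with $|n|\le n^2$) delivers precisely that.
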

This inequality   has been used in the
literature in a number of places, see e.g.~\cite{MR1078998}. The proof of the statement above (including the sharp constant)  can be
found in \cite{2015arXiv150907378O}.

The main observation that allows for an improvement of the lower bound from
\cite{2015arXiv150907378O} is that we may get a lower bound for the quantity on the left hand
side in \eqref{eq:6} from the smallness of the membrane energy directly by
integrating a suitable test function against the membrane term $g_y-\gd$. In
\cite{2015arXiv150907378O}, we obtained such an estimate by interpolation instead, which
also uses the control over the bending energy. This is unnecessary, and gives
slightly worse estimates.

The following calculation indicates how to use the smallness of the membrane
term to obtain estimates on integrals of the linearized curvature. Let $\Phi\in
L^1(B_1)$ be such that $D^2\Phi$ is a vector-valued Radon measure with support
in $B_1$. Then we have for all $y\in C^2(B_1;\R^3)$:
\begin{equation}
\begin{split}
  \int_{B_1}\Big(\sum_{i=1}^3&\det D^2
  y_i(x)-\pi\Delta^2\delta_0\Big)\Phi(x)\d\L^2\\
&=\int_{B_1}\Bigg( \left(y_{,1}\cdot
  y_{,2}-\ydel_{,1}\cdot \ydel_{,2}\right)\Phi_{,12}\\
&\qquad-\frac12
  \left(|y_{,1}|^2-|\ydel_{,1}|^2\right)\Phi_{,22}-\frac12\left(|y_{,2}|^2-|\ydel_{,2}|^2\right)\Phi_{,11}\Bigg)\d\L^2\\
  &=-\frac12\int_{B_1}(g_y-\gd):\cof D^2\Phi\,\d\L^2\,.
\end{split}\label{eq:1}
\end{equation}
Here, 
\[
\cof
D^2\Phi=\left(\begin{array}{cc}\Phi_{,22}&-\Phi_{,12}\\-\Phi_{,21}&\Phi_{,11}\end{array}\right)
\]
denotes the cofactor matrix of $D^2\Phi$. Note that $\cof$ is linear on two by
two matrices, and hence $\cof D^2\Phi$ is a well defined Radon measure under our assumptions.
After these preliminary remarks, we construct the upper bound in the statement
of Theorem \ref{thm:main1}.  
It is obtained by a simple mollification of $\ydel$ on a ball of size $h$ centered
at the origin. 
\begin{lemma}
\label{lem:uuperbd}
  We have
\[
\inf_{y\in W^{2,2}(B_1;\R^3)}\Ihd(y)\leq 2\pi\Delta^2h^2\left(\log \frac1h
  +C\right)\,,
\]
where $C=C(\Delta)$ does not depend on $h$.
\end{lemma}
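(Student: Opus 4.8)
The plan is to exhibit an explicit competitor obtained by rounding off the tip of the singular cone at scale $h$ and to bound its energy directly. Since the only singular contribution to $\ydel$ is the radial third component $\Delta|x|$, I would fix a profile $g_h\in C^{1,1}([0,\infty))$ with $g_h(r)=r$ for $r\ge h$, with $g_h'(0)=0$, and with $\|g_h\|_{L^\infty}+\|g_h'\|_{L^\infty}\le C$ and $\|g_h''\|_{L^\infty}\le C/h$; concretely one may take $g_h(r)=\tfrac{1}{2h}r^2+\tfrac h2$ on $[0,h]$ and $g_h(r)=r$ on $[h,\infty)$. Setting
\[
y(x):=\sqrt{1-\Delta^2}\,x+\Delta\,g_h(|x|)\,e_3\,,
\]
the condition $g_h'(0)=0$ ensures that $x\mapsto g_h(|x|)$ lies in $W^{2,\infty}(B_1)$, hence $y\in W^{2,2}(B_1;\R^3)$, and moreover $y\equiv\ydel$ on $B_1\setminus B_h$.

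Next I would estimate the two terms of $\Ihd(y)$ separately. From $\partial_j y=\sqrt{1-\Delta^2}\,e_j+\Delta g_h'(|x|)\hat x_j\,e_3$ one gets $g_y=(1-\Delta^2)\id+\Delta^2 g_h'(|x|)^2\,\hat x\otimes\hat x$, so that
\[
g_y-\gd=\Delta^2\big(g_h'(|x|)^2-1\big)\,\hat x\otimes\hat x
\]
is supported in $B_h$ and bounded there by a constant depending only on $\Delta$; hence $\int_{B_1}|g_y-\gd|^2\,\d\L^2\le C\,\L^2(B_h)\le C h^2$. For the bending term, $D^2 y=\Delta\big(g_h''(|x|)\,\hat x\otimes\hat x+\tfrac{g_h'(|x|)}{|x|}(\id-\hat x\otimes\hat x)\big)\,e_3$ gives $|D^2 y|^2=\Delta^2\big(g_h''(|x|)^2+g_h'(|x|)^2/|x|^2\big)$. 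On $B_1\setminus B_h$ this equals $\Delta^2/|x|^2$, so $\int_{B_1\setminus B_h}|D^2 y|^2\,\d\L^2=2\pi\Delta^2\log\frac1h$; on $B_h$, the bound on $g_h''$ together with $|g_h'(r)|\le (C/h)r$ (from $g_h'(0)=0$ and the Lipschitz bound on $g_h'$) yield $|D^2 y|^2\le C/h^2$, contributing at most $C$ after integration. Therefore $h^2\int_{B_1}|D^2 y|^2\,\d\L^2\le 2\pi\Delta^2 h^2\log\frac1h+C h^2$, and summing the two contributions gives $\Ihd(y)\le 2\pi\Delta^2 h^2\big(\log\frac1h+C\big)$, which proves the lemma since the infimum cannot exceed $\Ihd(y)$.

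I do not expect any genuine difficulty here; the construction is completely explicit and every estimate is elementary. The two points that require a little care are: (i) choosing the profile with $g_h'(0)=0$, so that the smoothing does not create a new $1/|x|$-type singularity at the origin via the term $g_h'(|x|)/|x|$, which would violate $W^{2,2}$-regularity; and (ii) observing that the natural bound $|D^2 y|\lesssim 1/h$ on the cap $B_h$ is harmless because, multiplied by $h^2$ and integrated over a region of area $\sim h^2$, it only contributes at the admissible order $O(h^2)$.
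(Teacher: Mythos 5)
Your proof is correct and its overall strategy is the same as the paper's: exhibit an explicit competitor that agrees with $\ydel$ outside $B_h$, is smoothed at scale $h$ near the origin, and then estimate the membrane and bending contributions separately, with the leading term $2\pi\Delta^2 h^2\log\frac1h$ coming from the bending energy on the annulus $B_1\setminus B_h$. The only difference lies in the choice of competitor: the paper takes a multiplicative radial cutoff $y_h(x)=\eta(|x|/h)\,\ydel(x)$, which collapses a small disk to the origin, whereas you keep the planar component exact and replace $|x|$ by a $C^{1,1}$ profile $g_h(|x|)$ in the third component, producing a genuine cap that rounds off the tip of the cone. Your competitor is the more geometrically natural one (it stays close to $\ydel$ pointwise and is an immersion), and it isolates cleanly why $g_h'(0)=0$ is needed to avoid reintroducing the $1/|x|$ singularity; the paper's multiplicative cutoff is cruder but requires writing down even fewer formulas. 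Both yield identical leading-order estimates, including the sharp constant $2\pi\Delta^2$ matching the lower bound, and your bookkeeping ($|g_y-\gd|\le C$ on $B_h$, $|D^2y|\le C/h$ on $B_h$, $|D^2y|=\Delta/|x|$ outside) is exactly what the paper records for its competitor.
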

\begin{proof}
This is the same upper bound construction as in 
\cite{2015arXiv150907378O} (see Lemma 2 in that reference), and we will be brief. We choose $\eta\in C^\infty([0,
\infty))$ with $\eta=0$ on $[0,1/2]$, $\eta=1$ on $[1,\infty)$, 
and $|\eta'|\leq C$, $|\eta''|\leq C$. We set
\[
y_h(x)=\eta(|x|/h)\ydel(x)\,.
\]
One easily shows 
\[
\begin{array}{rlcrll}
  |g_{y_h}-\gd|&\leq C \quad &\text{ and }&\quad |D^2y_h|&\leq Ch^{-1} \quad&\text{ on }B_{h}\,,\\
  g_{y_h}-\gd&=0 \quad &\text{ and }&\quad |D^2y_h(x)|&=\Delta/|x| \quad&\text{ on }B_1\setminus
  B_{h}\,.
\end{array}
\] 
This implies
\[
\begin{split}
  \int_{B_1} |g_{y_h}-\gd|^2\d \L^2&\leq
  \int_{B_h} C\d\L^2\\
  &\leq C h^2\,,\\
 \int_{B_1} |D^2 y_h|^2\d\L^2&\leq \int_{B_1\setminus B_h}
  \frac{\Delta^2}{|x|^2}\d\L^2+\int_{B_h}\frac{C}{h^2}\d\L^2\\
  &= 2\pi\Delta^2\int_h^1 \frac{\d r}{r}+ C\\
  &= 2\pi\Delta^2\log\frac1h+ C\,.
\end{split}
\]
This implies the claim of the lemma.
\end{proof}
\begin{proof}[Proof of Theorem \ref{thm:main1}] The upper bound is proved by
  Lemma \ref{lem:uuperbd}; hence we may choose $C_2$ to be the constant from
  that lemma. 
Now it suffices to show the following: There exist $C_1,C_3$ such that if  $y\in W^{2,2}(B_1;\R^3)$ satisfies
\eqref{eq:20}, then also the lower bound in \eqref{eq:22} and \eqref{eq:19},
\eqref{eq:21} hold true. 

Let  $y\in W^{2,2}(B_1;\R^3)$ satisfy \eqref{eq:20}. By density of $C^2$ in
  $W^{2,2}$, we may assume $y\in C^2(B_1;\R^3)$ for a proof of the remaining statements.
Let $0<r<1$. Using Lemma \ref{lem:isoper}, we have for $i=1,2,3$:
\[
\frac{1}{2\pi}\int_{\partial B_r}|D^2y_i|\d\H^1\geq
\left(\frac1\pi\left|\int_{B_r}\det D^2y_i\d\L^2\right|\right)^{1/2}\,.
\]
Applying Jensen's inequality, we get
\[
\frac{1}{2\pi r}\int_{\partial B_r}|D^2y_i|^2\d\H^1
\geq\left(\frac{1}{2\pi r}\int_{\partial B_r}|D^2y_i|\d\H^1\right)^2\,.
\]
Combining these two estimates, we obtain
\[
\int_{\partial B_r}|D^2y_i|^2\d\H^1\geq\frac2r
\left|\int_{B_r}\det D^2 y_i\d\L^2\right|\,.
\]


By the triangle inequality, 

\begin{equation}
\int_{\partial B_r}|D^2y|^2\d\H^1\geq \frac2r\left|\int_{B_r}\sum_i\det D^2 y_i\d\L^2\right| \,.\label{eq:4}
\end{equation}
Now  choose $h_0=h_0(y)\in[h,2h]$ such that
\begin{equation}
\int_{\partial B_{h_0}}|g_y-\gd|^2\d\H^1\leq Ch^{-1}\int_{B_1}|g_y-\gd|^2\d\L^2\,.\label{eq:5}
\end{equation}
Choosing $h_0<R<1$ and integrating \eqref{eq:4} over the range $r\in[h_0,R]$, we get

 \begin{equation}
\label{eq:18}
\begin{split}
  \int_{B_R\setminus B_{h_0 }}&|D^2y|^2\d\L^2\\
\geq&
  2\left|\int_{h_0}^R\frac{1}{r}\left(\int_{B_r}\det D^2 y_i\d\L^2\right)\d r\right|\\
\geq &2\int_{h_0}^R\frac{\pi\Delta^2}{r}\d
r\\
&-2\left|\int_{h_0}^R\d r\int_{B_1}\d x \frac1r
  \chi_{B_r}(x)\left(\pi\Delta^2\delta_0(x)-\sum_i\det D^2y_i(x)\right)\right|\\
  = &
  2\pi\Delta^2\log\frac{R}{h_0}-
2\left|\int_{B_1}\d x \Phi(x)\left(\pi\Delta^2\delta_0(x)-\sum_i\det D^2y_i(x)\right)\right|\,,
  \end{split}
  \end{equation}
where we have used Fubini's Theorem to change the order of integration, and have
defined the test function
\[
\Phi(x):=\int_{h_0}^R\frac{1}{r}\chi_{B_r}(x)\d r=\begin{cases}\log\frac{R}{h_0}
  &\text{ if } |x|\leq h_0\\\log \frac{R}{|x|}&\text{ if }h_0<|x|\leq R\\0&\text{
    else. }\end{cases}
\] 
Now we set
\begin{equation}
\begin{split}
  A(R):=&\int_{B_1}\left(\sum_i\det D^2y_i-\pi\Delta^2\delta_0\right)\Phi(x)\d\L^2(x)\\
  =&-\frac12\int_{B_1}(g_y-\gd):\cof D^2\Phi\d\L^2(x)\,,
\end{split}\label{eq:2}
\end{equation}
where we have used \eqref{eq:1} in the second line.
An explicit computation yields
\[
\begin{split}
  D\Phi(x)=&-\frac{x}{|x|^2}\chi_{B_R\setminus B_{h_0}}(x)\\
  D^2\Phi(x)=&\left(-\id_{2\times 2}+2\hat x\otimes \hat
    x\right)|x|^{-2}\chi_{B_R\setminus B_{h_0}}(x)\\
&+|x|^{-1}\hat x\otimes \hat
  x\left(\H^1\ecke\partial B_R-\H^1\ecke\partial B_{h_0}\right)\,.
\end{split}
\]
Inserting these computations in  \eqref{eq:2}, we have

\begin{equation}
  \begin{split}
    |A(R)|\leq &\int_{B_R\setminus
      B_{h_0}}\frac{|g_y-\gd|}{|x|^2}\d\L^2+\frac1{2R}\int_{\partial
      B_R}|g_y-\gd|\d\H^1\\
&+\frac1{2h_0}\int_{\partial
      B_{h_0}}|g_y-\gd|\d\H^1\label{eq:3}
  \end{split}
  \end{equation}
By Cauchy-Schwarz,

\begin{equation}
\begin{split}
  \int_{B_R\setminus B_{h_0}}\frac{|g_y-\gd|}{|x|^2}\d\L^2 \leq
  &\left(\int_{B_R\setminus
      B_{h_0}}|g_y-\gd|^2\d\L^2\right)^{1/2}\left(\int_{B_R\setminus
      B_{h_0}}|x|^{-4}\d\L^2\right)^{1/2}\\
  \leq& \left(\int_{B_R\setminus
      B_{h_0}}|g_y-\gd|^2\d\L^2\right)^{1/2}\sqrt{2\pi}h_0^{-1}\\
  \int_{\partial B_R}|g_y-\gd|\d\H^1\leq &C\sqrt{R}\left(\int_{\partial
      B_R}|g_y-\gd|^2\d\H^1\right)^{1/2}\\
  \int_{\partial B_{h_0}}|g_y-\gd|\d\H^1\leq &C\sqrt{h_0}\left(\int_{\partial
      B_{h_0}}|g_y-\gd|^2\d\H^1\right)^{1/2}\,.
\end{split}\label{eq:33}
\end{equation}
Now choose $R_0\in[R-h,R]$ such that
\[
\int_{\partial B_{R_0}}|g_y-\gd|^2\d\H^1\leq Ch^{-1}\int_{B_1}|g_y-\gd|^2\d\L^2\,.
\]
Together with \eqref{eq:5} and \eqref{eq:33}, \ref{eq:3} becomes

\[
\begin{split}
  |A(R_0)|\leq & C\frac{\Em(y)^{1/2}}{h_0}\,,
  \end{split}
\]
where $\Em(y)$ is the membrane energy,
\[
\Em(y):=\int_{B_1}|g_y-\gd|^2\d\L^2\,.
\]
The lower bound for the bending energy \eqref{eq:18} becomes

\begin{equation}
\int_{B_{R_0}\setminus B_{h_0}}|D^2y|^2\d\L^2\geq 2\pi\Delta^2\log\frac{R_0}{h_0}-
C \frac{\Em(y)^{1/2}}{h_0}\,.\label{eq:7}
\end{equation}

We use \eqref{eq:7} with $R\uparrow 1$ to estimate the membrane
energy  by

\begin{equation}
\begin{split}
 \Em(y)\leq& 2\pi\Delta^2h^2\left(\log
    \frac{1}{h}+C_2\right)-2\pi\Delta^2h^2\log\frac{1}{h_0}+Ch^2\frac{\Em(y)^{1/2}}{h_0}\\
  \leq&  C\left(h^2
  +h\Em(y)^{1/2}\right)\,.
\end{split}\label{eq:34}
\end{equation}
Using Young's inequality $ab\leq \frac12\left((\e a)^2+(b/\e)^2\right)$, with
$\e=C^{-1}$, we have 
\[
\begin{split}
  Ch\Em(y)^{1/2}\leq &\frac12
  \Em(y)+Ch^2\,,
\end{split}
\]
and inserting this in \eqref{eq:34}, we get
\[
\begin{split}
  \Em(y)\leq &C h^2\,,
\end{split}
\]
which proves \eqref{eq:21}. Furthermore, inserting this in \eqref{eq:7}, we have
\[
\int_{B_{R_0}\setminus B_{h_0}}|D^2y|^2\d\L^2\geq 2\pi\Delta^2\log\frac{R_0}{h}-
C\,.
\]
Sending $R\to 1$, this
proves the lower bound in \eqref{eq:22}.
Furthermore,  
\[
\begin{split}
  \int_{B_1\setminus B_R}|D^2y|^2\d \L^2 &\leq
  h^{-2}(\Ihd(y)-\Em(y))-\int_{h_0}^{R_0}|D^2y|^2\d\L^2\\
  &\leq 2\pi\Delta^2(\log
  1/h+C_2)-2\pi\Delta^2\log\frac{R_0}{h}\\
  &\leq 2\pi\Delta^2\log\frac{1}{R}+C\,,
\end{split}
\]
which proves \eqref{eq:19}.
This completes the proof of the theorem.
\end{proof}
\section{Proof of Theorem \ref{thm:coneconv}}
\subsection{Isometric immersions of a singular cone}
\label{sec:isom-immers-sing}
The plan of the proof is as follows: The crucial inequality \eqref{eq:19} shows that on a fixed annulus $B_1\setminus
B_R$, the $W^{2,2}$ norm of a sequence of deformations $y_h$ satisfying
$\Ihd(y_h)\leq 2\pi\Delta^2h^2(\log 1/h+C)$ is bounded as $h\to 0$. One gets
weak convergence in $W^{2,2}$ to a limit deformation that is an isometric
immersion with respect to $\gd$ (since  the membrane energy of the limit
function vanishes by $\Em(y_h)\leq C h^2\to
0$). We may apply the results on $W^{2,2}$ isometric immersions from
\cite{hornung2011approximation,MR2128713} to the limit, which means that the
limit deformation is developable. Using our energy estimates, we can show that
in fact, it must be identical to the singular cone $\ydel$ up to a Euclidean
motion.\\[.2cm] 
The fact  that flat surfaces are locally developable is a classical result from Differential Geometry of surfaces. For functions in $W^{2,2}$, this statement has been proved in \cite{MR2128713,hornung2011approximation,MR2771671}:
\newcommand{\Lb}{\mathbf{L}}
\begin{theorem}[Theorem 2 in \cite{hornung2011approximation}]
\label{thm:dev}
  Let $\Omega\subset\R^2$ with Lipschitz boundary. Let $y\in W^{2,2}(\Omega;\R^3)$ with $Dy^TDy=\id_{2\times 2}$. Then $y\in C^1(\Omega)$ and there exists a set $\Lb_y$ of mutually disjoint closed line segments in $\bar \Omega$ with endpoints on $\partial \Omega$ with the following property:
For every $x\in\Omega$, either $D^2y=0$ in a neighborhood of $x$, or there exists $L\in\Lb_y$ with $x\in L$ and $Dy$ is constant on $L$.
\end{theorem}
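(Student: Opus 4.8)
The plan is to follow the route of \cite{MR2128713,hornung2011approximation,MR2771671}: (i) extract the second fundamental form of $y$ and show it is pointwise degenerate; (ii) upgrade $y$ to $C^1$; (iii) recover the segments $\Lb_y$ as the leaves of the kernel foliation of the second fundamental form; and (iv) assemble the foliation and check the dichotomy.

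For (i), I would differentiate the isometry constraint $\partial_i y\cdot\partial_j y=\delta_{ij}$ distributionally and, using $\partial_{ij}y=\partial_{ji}y$, run the usual three-index permutation argument to obtain $\partial_{ij}y\cdot\partial_k y=0$ for all $i,j,k$. Hence each $\partial_{ij}y$ is pointwise parallel to the Gauss map $n:=\partial_1 y\times\partial_2 y$, and there is a symmetric matrix field $\secf\in L^2(\Omega;\R^{2\times2}_{\sym})$ with $D^2y=n\otimes\secf$; in particular $|D^2y|=|\secf|$, $n\in W^{1,2}(\Omega;S^2)$, and $\partial_i n=-\sum_j\secf_{ij}\partial_j y$ a.e.\ (Weingarten). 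Feeding $Dy^TDy=\id$ into the weak Hessian-determinant identity used at the start of the proof of Theorem~\ref{thm:main1} gives $\sum_i\det D^2y_i=0$, while pointwise $\det D^2y_i=n_i^2\det\secf$; summing over $i$ yields $\det\secf=0$ a.e., so $\secf$ has rank at most one a.e.\ (equivalently, each scalar component $y_i$ solves the degenerate Monge--Amp\`ere equation $\det D^2y_i=0$).

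For (ii), I would invoke the $C^1$ regularity established in \cite{MR2128713,hornung2011approximation}: since $Dn$ has rank at most one a.e.\ and comes from a $W^{2,2}$ isometric immersion, $n$ admits a continuous representative, and then so does $Dy$ via the orthonormality relations. For (iii) — the core step — fix a point $x_0$ at which $\secf$ has an approximate limit with $\secf(x_0)\neq0$, and let $e\in\R^2$ be a unit vector spanning $\ker\secf(x_0)$. Along the line through $x_0$ in direction $e$ one has $\partial_e(Dy)=n\otimes(\secf e)$ and $\partial_e n=-Dy\,\secf e$, so $Dy$ and $n$ are locally constant on that line exactly where $\secf e=0$; the claim is that $\secf e\equiv0$ on the whole connected component $L_{x_0}$ of $(x_0+\R e)\cap\Omega$ through $x_0$, whence $Dy\equiv Dy(x_0)$ on $L_{x_0}$ and $L_{x_0}$ has both endpoints on $\partial\Omega$. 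The mechanism is the rank-one degeneracy: if $\secf e$ were nonzero somewhere on the line, then continuity of $Dy$ together with $\det\secf=0$ would force $\secf$ to blow up transversally to the line, contradicting $\secf\in L^2$; the same reasoning rules out an interior endpoint.

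I expect step (iii), the compactness of the leaves, to be the main obstacle: it is exactly where the minimal regularity ($\secf$ only $L^2$, $Dy$ only $C^1$) must be confronted, and it is the heart of \cite{MR2128713,hornung2011approximation,MR2771671}; the $C^1$ regularity of step (ii) is a close second. Once (iii) is in hand, step (iv) is routine: let $\Omega_{\mathrm{flat}}$ be the open set of points having a neighbourhood on which $D^2y=0$; for $x\in\Omega\setminus\Omega_{\mathrm{flat}}$ pick $x_k\to x$ with $\secf(x_k)\neq0$ and pass to a Hausdorff limit of the nondegenerate segments $L_{x_k}$, obtaining (by continuity of $Dy$) a segment $L\ni x$ with endpoints on $\partial\Omega$ along which $Dy$ is constant; take $\Lb_y$ to be the family of all such segments. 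Two members of $\Lb_y$ that intersect share the direction of $\ker\secf$ and the value of $Dy$ at the common point, hence coincide, so the members are mutually disjoint. Finally, every $x\in\Omega$ either lies in $\Omega_{\mathrm{flat}}$, in which case $D^2y=0$ near $x$, or lies on some $L\in\Lb_y$ with $Dy$ constant on $L$, which is the asserted dichotomy.
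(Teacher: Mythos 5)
The paper does not prove Theorem~\ref{thm:dev}; it is cited verbatim as Theorem~2 of \cite{hornung2011approximation}, and the surrounding text explicitly attributes the result to \cite{MR2128713,hornung2011approximation,MR2771671}. So there is no in-paper proof to compare against. Your sketch is, nevertheless, a faithful roadmap of the argument that actually appears in those references: derive $D^2y=n\otimes\secf$ with $\secf\in L^2$ from the isometry constraint, deduce $\det\secf=0$ a.e.\ (your route via the distributional Hessian determinant is a legitimate alternative to invoking the Gauss equation directly), and then extract the ruling structure from the kernel directions of $\secf$.

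The genuine gaps are exactly where you flag them, but they deserve to be stated more sharply. Step~(ii) as written is circular: you ``invoke the $C^1$ regularity established in \cite{MR2128713,hornung2011approximation}'', which is part of the very statement you are trying to prove; the honest input here is Kirchheim's result on $W^{2,2}$ solutions of the degenerate Monge--Amp\`ere equation $\det D^2 u=0$, which gives both $C^1$ regularity and local developability of each scalar component $y_i$, and step~(ii) should be reduced to that. Step~(iii) is the real crux, and the mechanism you propose (``continuity of $Dy$ together with $\det\secf=0$ would force $\secf$ to blow up transversally to the line, contradicting $\secf\in L^2$'') is a heuristic, not a proof: at $L^2$ regularity $\secf$ need not have a pointwise trace along the candidate ruling, so one cannot argue by tracking $\secf$ along a one-dimensional set, and the ``transversal blow-up'' picture presupposes smoothness of the envelope that is unavailable. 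What Pakzad actually does is work with the level sets of the continuous map $Dy$, show via a covering/connectedness argument that the connected component of a level set through a non-flat point is a full chord of $\Omega$, and only then identify that chord with the kernel line of (a representative of) $\secf$. Your step~(iv) is fine once (iii) is in place, but note that disjointness of the segments in $\Lb_y$ also needs the observation that two chords meeting at an interior point would force $Dy$ to be constant on their convex hull, hence on a neighbourhood, contradicting membership in $\Omega\setminus\Omega_{\mathrm{flat}}$; ``same kernel direction at the common point'' alone does not rule out a transversal crossing when $\secf$ vanishes there.
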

We will need a variant of this theorem for functions whose  domain is a singular
cone.

To be able to use Theorem \ref{thm:dev}, we are going to consider the cone in
a flat reference configuration.
Let $\arccos:[-1,1]\to [0,\pi]$ denote the inverse of $\cos:[0,\pi]\to[-1,1]$.
Define 
\[
B_{1,\Delta}:=\left\{x=(x_1,x_2)\in B_1\setminus\{0\}:0\leq \arccos \frac{ x_1}{
|x|}< \sqrt{1-\Delta^2}\pi\right\}\,.
\] 
Let $\R_-:=\{(x_1,0):x_1\leq 0\}$, and let $\varphi:\R^2\setminus\R_-\to \R$
be the angular coordinate satisfying $x=|x|(\cos\varphi(x),\sin\varphi(x))$ with
values in $(-\pi,\pi)$.
We define the map $\iota\equiv\iota_\Delta:\R^2\setminus\R_-\to B_1$  by
\[
\iota(x)=\left(|x|\cos\frac{\varphi(x)}{\sqrt{1-\Delta^2}},|x|\sin\frac{\varphi(x)}{\sqrt{1-\Delta^2}}\right)\,.
\]
For a sketch of $B_{1,\Delta}$ and $\iota_\Delta$, see Figure \ref{fig:iota}.

\begin{figure}[h]
\includegraphics{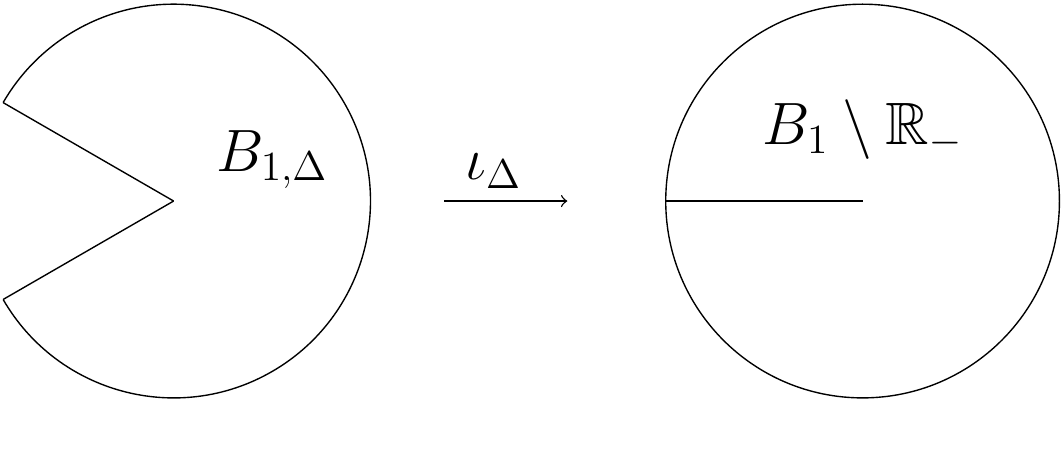}
\caption{The domain $B_{1,\Delta}$ and the map $\iota_\Delta:B_{1,\Delta}\to
  B_1\setminus \R_-$.\label{fig:iota}}
\end{figure}

On $\iota(B_{1,\Delta})=B_1\setminus \R_-$, $\iota$ has a well defined inverse, that we denote by
\[
j: B_1\setminus \R_-\to B_{1,\Delta}\,.
\]
Furthermore, let 
$\phi_\Delta:=(1-\sqrt{1-\Delta^2})2\pi$ and let the rotation $S_\Delta\in
SO(2)$ be defined by
\[
S_{\Delta}=\left(\begin{array}{cc}\cos\phi_\Delta & -\sin\phi_\Delta\\
\sin\phi_\Delta&\cos\phi_\Delta \end{array}\right)\,.
\]
Finally, let 
\[
\begin{split}
  \bdD:=& \partial B_{1,\Delta}\setminus (\partial B_1\cup
  \{0\})
\end{split}
\]
Note that $\bdD$ has two connected components, one contained in the upper half
plane and one in the lower half plane. We will denote them by $\bdD^+$ and
$\bdD^-$ respectively, see Figure \ref{fig:Lad}. The rotation matrix $S_\Delta$
has been chosen such that $S_\Delta\partial_\Delta^+=\partial_\Delta^-$.

\medskip

\newcommand{\Wiso}{W^{2,2}_{\mathrm{iso}}(B_{1,\Delta})}
We define 
\begin{equation}
  \label{eq:32}
  \begin{split}
    \Wiso:=&\Bigg\{Y\in W^{2,2}_{\mathrm{loc}}(B_{1,\Delta};\R^3): g_Y=\id_{2\times 2}\,, \\
    &Y(S_\Delta x)= Y(x)\, \text{ and }\,
    DY(S_\Delta x)=DY( x)S_\Delta \, \text{ for every }\,x\in \bdD^+\,.\Bigg\}
\end{split}
\end{equation}
This definition is chosen such that if
 $y\in W^{2,2}_{\mathrm{loc}}(B_1\setminus\{0\};\R^3)$ with $Dy^TDy=\gd$, then
 $y\circ \iota\in \Wiso$.

\medskip

To $Y\in \Wiso$, 
 we may apply Theorem \ref{thm:dev} (with $\Omega=B_{1,\Delta}\setminus B_\rho$)
 to obtain a set $\Lb_Y$ of line segments with the properties stated there. In
 fact, by sending $\rho\to 0$, we get a set of (relatively) closed mutually
 disjoint line segments in $\overline{B_{1,\Delta}}\setminus\{0\}$. If a line
 segment has only one endpoint in $\overline{B_{1,\Delta}}\setminus\{0\}$, then
 we say by slight abuse of terminology that one of its endpoints is the origin.

\medskip

\newcommand{\Lad}{L^{\mathrm{ad}}}

Next, we are going to define an ``adjoint'' line segment $\Lad$ to any
  $L\in\Lb_Y$ with an endpoint $x\in \bdD$. Note that for such  $L$, there
  exists $v\in \partial B_1$ and $q>0$ such that
\[
L=\{x+tv:t\in [0,q]\}\,.
\]
First let us assume $x\in \partial_\Delta^+$. 
By the definition of $\Wiso$ in \eqref{eq:32},
we have that $x':=S_\Delta x\in \partial_\Delta^-$, and $DY(x')=DY(x)S_\Delta
$. Moreover, there has to 
 exist $\Lad\in \Lb_y$ with $x'\in \Lad$, and $\Lad=\{x'+t S_\Delta
 v:t\in\R\}\cap \overline{B_{1,\Delta}}$. This defines $\Lad$ for
 $x\in\partial_\Delta^+$; for $x\in \partial_\Delta^-$, we define it
 analogously, replacing  $S_\Delta$ by $S_\Delta^{-1}$. For a sketch of the
 construction, see Figure \ref{fig:Lad}.

\begin{figure}[h]
\includegraphics{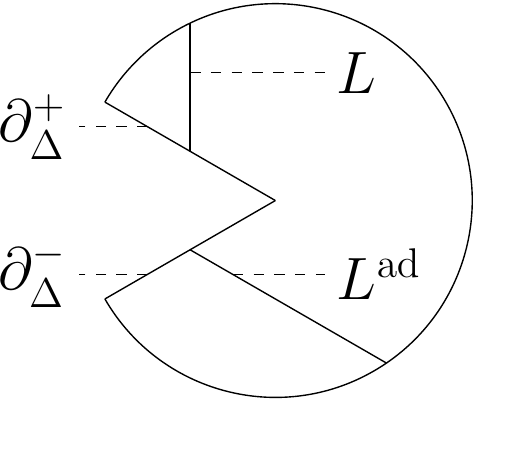}
\caption{The subsets $\partial_\Delta^+,\partial_\Delta^-$ of the boundary and
  adjoint line segments $L,L^{\mathrm{ad}}$. \label{fig:Lad}}
\end{figure}


From now on, the line segments in $\Lb_Y$ for which one of the endpoints is $0$
will be called ``good'', and line segments in the complement of the set of good
line segments will be called ``bad''. The sets of good and bad line segments will be
denoted by $\Lb^{(g)}_Y, \Lb^{(b)}_Y$ respectively. For any bad line segment, we
can lower the elastic energy by ``flattening'' the deformation $Y$ on one side
of the line segment. This is the idea behind the following lemma. For a sketch of
this operation, see Figure \ref{fig:FL}.

\begin{figure}[h]
\includegraphics{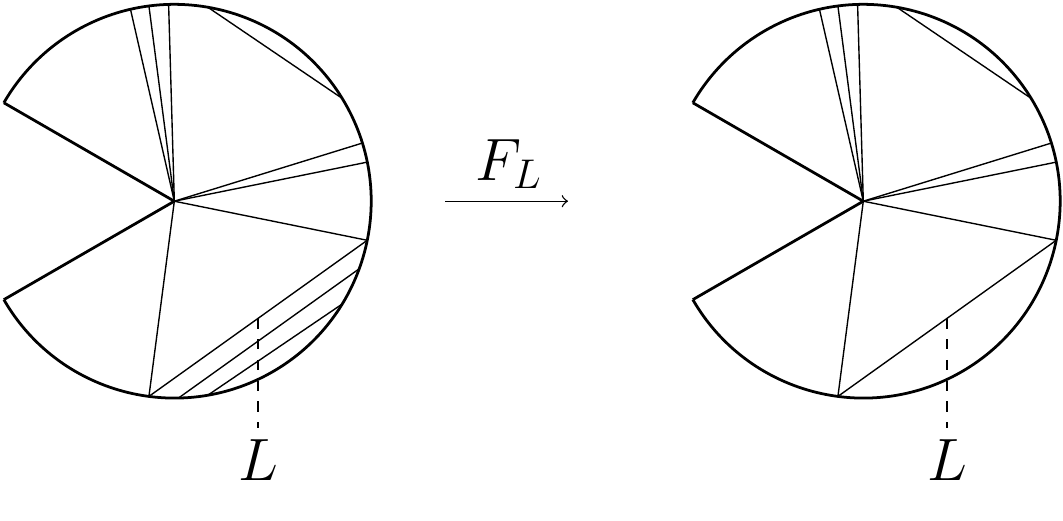}
\caption{
In the left panel, we have the segments that belong to 
 $\Lb_Y$, and $L\in \Lb_Y$  is a bad line segment. We can flatten the
  deformation $Y$ on the side of $L$ whose closure does not contain the origin, and obtain a deformation $F_L(Y)$, such
  that $\Lb_{F_L(Y)}$ consists of those line segments in
  $\Lb_Y$ that are on the same side of $L$ as the origin, see the right panel. \label{fig:FL}}
\end{figure}

\begin{lemma}
\label{lem:flattening}
  For every $Y\in\Wiso$, there exists $Y_\infty\in \Wiso$ with the following
  properties:
  \begin{itemize}
  \item[(i)] $\Lb^{(b)}_{Y_\infty}=\emptyset$ and
    $\Lb^{(g)}_{Y_\infty}=\Lb^{(g)}_{Y}$
\item[(ii)] For $0<\rho<1$, we have 
  \begin{equation}
    \begin{split}
      \int_{B_{1,\Delta}\setminus B_\rho}|D^2Y_\infty:&( (D\iota)^{-1}\otimes
      (D\iota)^{-1})|^2\d\L^2\\
&\leq \int_{B_{1,\Delta}\setminus B_\rho}|D^2Y :(
      (D\iota)^{-1}\otimes (D\iota)^{-1})|^2\d\L^2\,,
    \end{split}
\label{eq:28}
      \end{equation}
    with equality for all $0<\rho<1$ if and only if $Y=Y_\infty$.
  \end{itemize}
\end{lemma}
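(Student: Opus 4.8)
The plan is to flatten $Y$ at once on every ``cap'' cut off from the apex by a bad line segment; this is the operation $F_L$ of Figure \ref{fig:FL}, performed simultaneously over all of $\Lb^{(b)}_Y$. First I would apply Theorem \ref{thm:dev} with $\Omega=B_{1,\Delta}\setminus B_\rho$, let $\rho\downarrow 0$, and use the adjoint segments to regard $\Lb_Y=\Lb^{(g)}_Y\cup\Lb^{(b)}_Y$ as a family of pairwise disjoint geodesic chords on the singular cone (i.e.\ on $B_{1,\Delta}$ with $\bdD^\pm$ identified through $S_\Delta$, a surface which is flat off the apex). Each bad chord $L$ — one not reaching the apex — separates the cone into two pieces; let $U_L$ be the one whose closure misses the apex, an intrinsically flat topological disc. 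Disjointness and connectedness force, for bad $L\neq L'$, that $U_L$ and $U_{L'}$ are nested or disjoint, so the open set $\mathcal U:=\bigcup_{L\text{ bad}}U_L$ has at most countably many connected components $V_i$ (their $\gd$-areas summing to at most that of the cone), and each $V_i$ is again a cap $U_{L_i^*}$ with $L_i^*$ the chord obtained as the limit of the bad chords whose caps exhaust $V_i$; along $L_i^*$ the gradient $DY$ is constant — it is the limit of its constant values on those chords — so $L_i^*$ is itself a ruling.

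Next I would set $W:=(\text{cone})\setminus\mathcal U$, put $Y_\infty:=Y$ on $W$, and on each $V_i$ let $Y_\infty$ be the unique \emph{planar} isometric immersion of the flat disc $V_i$ matching $Y$ to first order along $L_i^*$; in a development of $V_i$ this is just the affine map $x\mapsto Y(x_0)+DY|_{L_i^*}(x-x_0)$ for a fixed $x_0\in L_i^*$. As $Y\in C^1$ and $DY$ is constant on $L_i^*$, the curve $Y|_{L_i^*}$ is affine with that same derivative, so $Y_\infty$ glues $C^1$ across each $L_i^*$; approximating $Y_\infty$ by the maps that flatten only $V_1,\dots,V_n$ shows $Y_\infty\in W^{2,2}_{\loc}$, the flattened pieces are isometries since $DY|_{L_i^*}^{T}DY|_{L_i^*}=\id$, so $g_{Y_\infty}=\id$, and the construction is intrinsic to the cone, hence compatible with the identification of $\bdD^\pm$, so $Y_\infty\in\Wiso$. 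Property (ii) then follows at once: $D^2Y_\infty$ vanishes on $\mathcal U$ and equals $D^2Y$ on $W$, so the integrand of \eqref{eq:28} for $Y_\infty$ is $0$ on $\mathcal U$ and agrees with that for $Y$ on $W$; this gives the inequality, and equality for every $\rho$ forces, letting $\rho\downarrow0$, $D^2Y:((D\iota)^{-1}\otimes(D\iota)^{-1})=0$ a.e.\ on $\mathcal U$, hence $D^2Y=0$ a.e.\ on each $V_i$ because $D\iota$ is invertible throughout $B_{1,\Delta}$, hence $Y=Y_\infty$ on each $V_i$ by the matching along $L_i^*$, i.e.\ $Y=Y_\infty$.

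For (i) I would take $\Lb_{Y_\infty}:=\Lb^{(g)}_Y$ — after the harmless normalization that $\Lb^{(g)}_Y$ already contains, through each point of $W$ near which $Y$ is not planar, the maximal good (hence radial) ruling there — and verify it is admissible for $Y_\infty$ in Theorem \ref{thm:dev}; since it consists of good chords only, this yields $\Lb^{(b)}_{Y_\infty}=\emptyset$ and $\Lb^{(g)}_{Y_\infty}=\Lb^{(g)}_Y$. If $x\in\mathcal U$, then $Y_\infty$ is affine near $x$. If $x\in\mathrm{int}\,W$ then $Y_\infty=Y$ near $x$; if moreover $Y$ is not planar near $x$, the ruling through $x$ given by Theorem \ref{thm:dev} must be good, for were it a bad chord $L_x$ its cap would lie in some $V_i$, making $L_x$ equal to $L_i^*$ or contained in the open cap $V_i$ and hence $x\in\overline{\mathcal U}$, contradicting $x\in\mathrm{int}\,W=(\text{cone})\setminus\overline{\mathcal U}$; and a good ruling is radial, stays in $\overline W$ where $Y_\infty=Y$, so $DY_\infty$ is constant on it and it lies in $\Lb^{(g)}_Y$. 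If $x\in L_i^*$, then $Y_\infty$ is affine on the $V_i$-side of $x$; when $Y$ is planar on the $W$-side the two affine pieces coincide (equal $1$-jet along $L_i^*$) and $Y_\infty$ is planar near $x$; otherwise there are $z_n\to x$ in $\mathrm{int}\,W$ at which $Y$ is not planar, the radial rulings through them converge — after a brief check on their radial extent — to the radial segment $R_*$ through $x$, on which $DY_\infty=DY$ is constant with value $DY(x)$, so $R_*\in\Lb^{(g)}_Y$ is a good ruling through $x$. Points of $\partial W$ not lying on any $L_i^*$ (accumulation points of the flattened caps) are handled by applying the same argument to the finite flattenings and passing to the limit.

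The main obstacle is the last substantive case above: ruling out that after flattening a ``cylinder-like'' pencil of non-radial rulings survives abutting some $L_i^*$ from the side of $W$. Equivalently, one must show that on $\mathrm{int}\,W$ every point near which $Y$ is not planar carries a \emph{radial} ruling; this is exactly what singles out the cap decomposition as the right one, and it is forced solely by the disjointness of the segments of $\Lb_Y$ together with the laminar structure of the caps. The remaining ingredients — that the components of $\mathcal U$ are again caps with ruling boundaries, and the $W^{2,2}_{\loc}$-regularity and $C^1$-gluing of $Y_\infty$ across the possibly infinitely many, accumulating chords $L_i^*$ — are technical but routine, the latter handled via the finite-flattening approximation used above.
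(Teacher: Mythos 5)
Your proof takes essentially the same route as the paper's: flatten $Y$ on the cap cut off by each bad ruling, exploit that the caps are nested or disjoint, and realize $Y_\infty$ as a limit of finitely many flattenings, deducing (ii) from the pointwise comparison of $D^2Y_\infty$ with $D^2Y$ together with the invertibility of $D\iota$. The only organisational difference is that you flatten simultaneously on the connected components of $\bigcup_L U_L$, whereas the paper enumerates the maximal bad segments, composes the corresponding flattenings $F_{L_k}\circ\dots\circ F_{L_1}$ sequentially, and passes to a weak $W^{2,2}_{\loc}$-limit; the paper asserts (i) for this limit without the case analysis you sketch.
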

\begin{proof}
  For any $L\in \Lb^{(b)}_Y$, we may define a modified map $F_L(Y)\in
\Wiso$ as follows.  On $L$, we
  have $Y=A_Lx+b_L$ for some $A_L\in \R^{3\times 2}$ and $b_L\in
  \R^3$.  We note that
  $B_{1,\Delta}\setminus L$ has exactly two connected components. Let
  $E_L$ denote the connected component whose closure does not contain
  the origin. First let us assume that none of the endpoints of $L$ is in
  $\bdD$. Then we define $F_{L}(Y)\in \Wiso$ by

\begin{equation}
  F_{L}(Y)(x)=\begin{cases}A_Lx+b_L &\text{ if }x\in E_L\\Y(x)&\text{ else.}\end{cases}\label{eq:31}
\end{equation}
If one of the endpoints of $L$ is in $\bdD$, then  we set
\begin{equation*}
  F_{L}(Y)(x)=\begin{cases}A_Lx+b_L &\text{ if }x\in E_L\\A_{\Lad}x+b_{\Lad} &\text{ if }x\in E_{\Lad}\\Y(x)&\text{ else.}\end{cases}
\end{equation*}
Note that this definition indeed satisfies $F_{L}(Y)\in \Wiso$.
Obviously, we have $D^2(F_{L}(Y))=0$  on $E_L$ (and on $E_{\Lad}$) and hence, for
all $0<\rho<1$, we have
\begin{equation}
    \int_{B_{1,\Delta}\setminus
      B_\rho}|D^2F_L(Y)(D\iota)^{-1}|^2\d\L^2\leq \int_{B_{1,\Delta}\setminus B_\rho}|D^2Y(D\iota)^{-1}|^2\d\L^2\,\label{eq:36}.
      \end{equation}
We must distinguish two cases in \eqref{eq:36}: If $\Lb_{F_L(Y)}\subsetneq
\Lb_Y$,then  $F_L(Y)\neq Y$ and we must have $|D^2Y|>0$ on a subset of
positive measure of $E_L$. Hence,
 inequality must hold in \eqref{eq:36} for some $\rho$, since we
have
\begin{equation}
\sqrt{1-\Delta^2}\id_{2\times 2}\leq (D\iota)^{-1}\leq \id_{2\times 2}\label{eq:16}
\end{equation}
in the sense of
positive definite matrices. Equality in \eqref{eq:36} only holds in the case $F_L(Y)= Y$.

On $\Lb^{(b)}_Y$, we may
define an order relation by $L< L'$ if $E_L\subsetneq E_{L'}$. Since
bad line segments are mutually disjoint, we have that either $L<L'$,
$L>L'$ or $E_L\cap E_{L'}=\emptyset$. Hence, there exists an at most 
countable sequence $L_1,L_2,\dots$ of maximal bad line segments. If for two maximal line segments $L,L'$
we have $L'=\Lad$  then we exclude exactly one of them from that
sequence.
Now we define a sequence $Y_k\in\Wiso$ by

\begin{equation}
Y_k=F_{L_k}\circ\dots\circ F_{L_1}(Y).\label{eq:25}
\end{equation}

By \eqref{eq:36} and \eqref{eq:16}, $D^2Y_k$
is bounded in $L^2$.  Thus the sequence converges weakly in 
$W^{2,2}(B_{1,\Delta}\setminus B_{\rho};\R^3)$ for every $0<\rho<1$ to a limit
$Y_\infty\in \Wiso$ such that $\Lb_{Y_\infty}$ does not contain any
bad line segments, and $\Lb^{(g)}_{Y_\infty}=\Lb^{(g)}_{Y}$.
The claim \eqref{eq:28} follows from \eqref{eq:36} and the comment  after that equation. This proves the lemma.
\end{proof}

\begin{remark}
\label{rem:jrem}
  Letting $Y,Y_\infty$ as in Lemma \ref{lem:flattening}, we  have
\[
\begin{split}
  Y_{\infty}\circ j \in& W^{2,2}_{\mathrm{loc}}(B_1\setminus \{0\};\R^3)\\
  g_{Y_{\infty}\circ j}=&\gd\,.
\end{split}
\]
Furthermore,
\[
\int_{B_1\setminus B_\rho}|D^2(Y_\infty\circ j)|^2\d\L^2\leq
\int_{B_1\setminus B_\rho}|D^2(Y\circ j)|^2\d\L^2\quad \text{ for every }0<\rho<1\,.
\]

\end{remark}
\begin{proof}
 The first two statements follows immediately from $Y_\infty\in \Wiso$, and  only the inequality requires a
 proof. Let $\nu=Y_{,1}\wedge Y_{,2}/|Y_{,1}\wedge Y_{,2}|$ be the unit
 normal. By $DY^TDY=\id_{2\times 2}$, we have
 $D^2Y\bot DY$. Hence

\begin{equation}
  \begin{split}
    \left|D^2(Y\circ j)\right|^2=&\left|D^2Y:(Dj\otimes Dj)+DY D^2 j\right|^2\\
      =& \left|D^2Y:(Dj\otimes Dj)\right|^2+\left|DY D^2 j\right|^2\\
      =& \left|D^2Y:(Dj\otimes Dj)\right|^2+\left|D^2 j\right|^2\,,\label{eq:27}
    \end{split}
\end{equation}
where we used $DY\in O(2,3)$ in the last equality.
Now the claim of the remark follows from \eqref{eq:28} and a change of variables
in the integrals. 
\end{proof}

\subsection{Proof of Theorem \ref{thm:coneconv}}

\begin{proof}[Proof of Theorem \ref{thm:coneconv}]
To prove the convergence \eqref{eq:35}, it is enough to
prove that for any subsequence of $y_h$, there exists a further
subsequence such that the convergence \eqref{eq:35} holds. Hence, we assume that
we start with an arbitrary subsequence, and may take further subsequences at will.\\
\\
Given $0<R<1$, we may assume that $h\ll R$. Choose $R_0(h)\in [R-h,R]$ such that
\[
\int_{\partial B_{R_0(h)}}|g_{y_h}-\gd|^2\d\H^1\leq h^{-1}\int_{B_1}|g_{y_h}-\gd|^2\d\L^2\,.
\]
By Theorem \ref{thm:main1}, we have 

\begin{equation}
\begin{split}
  \int_{B_1\setminus B_R}|D^2y_h|\d\L^2&\leq
  \int_{B_1\setminus B_{R_0}}|D^2y_h|\d\L^2\\
  &\leq 2\pi\Delta^2\log \frac1{R}+C\,,
\end{split}\label{eq:8}
\end{equation}
where $C$ does neither depend on $h$ nor on $R$. This proves the boundedness of $y_h$ in
$W^{2,2}(B_1\setminus B_R;\R^3)$ and implies that there exists $\hat y_R\in
W^{2,2}(B_1\setminus B_R;\R^3)$ such that (for a subsequence)
\[
y_h\wto \hat y_R \quad \text{ in }W^{2,2}(B_1\setminus B_R;\R^3)\,.
\]
After taking a suitable diagonal sequence for $R=\frac1j$, $j=2,3,\dots$, we may
assume that $\hat y_R\in W^{2,2}_{\mathrm{loc.}}(B_1\setminus\{0\};\R^3)$ is
independent of $R$. We denote this function by $y^*$. By Theorem
\ref{thm:main1}, we have
\[
\int_{B_1}|g_{y^*}-\gd|\d\L^2=0\,.
\]
I.e., $ y^*$ is an isometry with respect to $\gd$. \\
By \eqref{eq:8},  we have
\begin{equation}
\int_{B_1\setminus B_R}|D^2 y^*|^2\d\L^2\leq 2\pi\Delta^2h^2\log \frac1R+C\,.\label{eq:9}
\end{equation}
Let $Y:B_{1,\Delta}\to\R^3$ be defined by
\[
Y:=y^*\circ \iota\,.
\]
Recalling the definitions from Section \ref{sec:isom-immers-sing}, we have
 $Y\in \Wiso$. By an application of Lemma \ref{lem:flattening}, we obtain $Y_\infty\in \Wiso$  such
 that $DY_\infty$ is constant on every line segment $(0,x)$ with $x\in \partial
 B_{1,\Delta}\cap \partial B_1$.
Now we set $y^\infty:=Y_\infty\circ j$, and obtain that $Dy^\infty$ is constant on every
line segment $(0,x]$ with $x\in \partial B_1$. Hence there exists a curve
$\gamma:\partial B_1\to S^2$ satisfying
$|\gamma'|=\sqrt{1-\Delta^2}$ such that
\begin{equation}
 y^\infty(x)=x \gamma(x/|x|)\,.\label{eq:15}
\end{equation}
Using this expression, explicit computation yields

\begin{equation}
\int_{\partial B_\rho}|D^2y^\infty|^2\d\H^1=\frac{1}{\rho}\int_{\partial
  B_1}|D^2y^\infty|^2\d\H^1\,.\label{eq:29}
\end{equation}
By Remark \ref{rem:jrem} and \eqref{eq:9},  we have that for every $0<\rho<1$,
  \begin{equation}
\int_{B_1\setminus B_{\rho}}|D^2 y^\infty|^2\d\L^2\leq 2\pi\Delta^2\log\frac{1}{\rho}+C\,. \label{eq:10}
\end{equation}

Combining \eqref{eq:29} and \eqref{eq:10}, we see that for every $0<\rho<1$, we have
\[
\int_{\partial B_\rho}|D^2 y^\infty|^2\d\H^1\leq \frac{2\pi\Delta^2}{\rho}\,,
\]
and the constant $C$ in \eqref{eq:10} is in fact 0.\\
\\
By $g_{y^\infty}=\gd$, we have
\[
\sum_{i=1}^3\det D^2 y^\infty_i=\pi\Delta^2\delta_0
\]
distributionally. We may now estimate using Lemma \ref{lem:isoper}, for any
$0<\rho<1$, 

\begin{equation}
\begin{split}
  \pi\Delta^2=&\int_{B_\rho}\sum_{i=1}^3\det D^2 y^\infty_i\d \L^2\\
  \leq &\sum_i\left|\int_{B_\rho}\det D^2 y^\infty_i\d \L^2\right|\\
\leq &\frac{1}{4\pi}\sum_i\left(\int_{\partial B_\rho} |D^2y^\infty_i(x)\cdot \hat x^\bot|\d\H^1(x)\right)^2\\
  \leq &\frac{1}{4\pi}\sum_i2\pi \rho\left(\int_{\partial B_\rho}|D^2y^\infty_i(x)\cdot \hat
    x^\bot|^2\d\H^1(x)\right)\\
\leq & \frac{\rho}{2}\int_{\partial B_\rho}|D^2 y^\infty(x)\cdot \hat
    x^\bot|^2\d\H^1(x)\\
\leq&\pi\Delta^2\,.
\end{split}\label{eq:11}
\end{equation}
Here, to obtain the fourth from the third line, we have used Jensen's inequality.
By this chain of estimates, all the inequalities must have been equalities, and we  have
\[
\left(\int_{\partial B_\rho}\sum_i |D^2y^\infty_i(x)\cdot \hat
  x^\bot|\d\H^1(x)\right)^2=2\pi \rho \left(\int_{\partial B_\rho}|D^2y^\infty_i(x)\cdot \hat
    x^\bot|^2\d\H^1(x)\right)
\]
and thus
\begin{equation}
|D^2y^\infty_i(x)\cdot \hat x^\bot|^2=\mathrm{constant}\quad \text{ for
}x\in\partial B_\rho,\,i\in\{1,2,3\}\,.\label{eq:13}
\end{equation}
Additionally, \eqref{eq:11} implies that
\begin{equation}
  |D^2y^\infty(x)\cdot \hat x^\bot|^2=\frac{\Delta^2}{\rho^2}\quad \text{ for }x\in\partial B_\rho\,.\label{eq:14}
\end{equation}
By \eqref{eq:15}, we have $D^2y^\infty(x)=|x|^{-1}(\gamma+\gamma'')\otimes\hat
x^\bot\otimes\hat x^\bot$. Combining this with \eqref{eq:13}, we get
\[
(\gamma+\gamma'')\cdot e_i=\mathrm{constant} \text{ on }\partial B_1
\]
for $i=1,2,3$. We write $c_i= (\gamma+\gamma'')\cdot e_i$, and have
 $D^2y^\infty_i(x)=\frac{c_i}{|x|}\hat x^\bot\otimes\hat x^\bot$, which
implies
\[
y^\infty_i(x)=c_i|x|+a_i\cdot x+b_i \quad \text{ for }i=1,2,3\,,
\]
for some  $a_i\in \R^2,\,b_i\in\R$. By \eqref{eq:13} we obtain
\[
\left|D^2 y^\infty(x)\right|^2=\frac{\sum_i
  c_i^2}{|x|^2}=\frac{\Delta^2}{|x|^2}\,,
\]
and thus
$\sum_{i}c_i^2=\Delta^2$. By $g_{y^\infty}=g_{\Delta}$, we have
\[
\begin{split}
  \id_{2\times 2}-\Delta^2\hat x^\bot\otimes\hat x^\bot=&(c\otimes \hat
  x+a)^T(c\otimes x+a)\\
  =& |c|^2\hat x\otimes \hat x+(c\cdot a)\otimes\hat x+\hat x\otimes(c\cdot
  a)+a^Ta\,.
\end{split}
\]
This yields 
\[
(1-\Delta^2)\id_{2\times 2}= (c\cdot a)\otimes\hat x+\hat x\otimes(c\cdot
  a)+a^Ta\,,
\]
which can only hold true for all $\hat x\in \partial B_1$ if  $c\cdot a=0$ and
$a^Ta=(1-\Delta^2)\id_{2\times 2}$. This implies that
\[
R:=\left(\frac{a}{\sqrt{1-\Delta^2}},\frac{c}{\Delta}\right)\in O(3)
\]
is an orthogonal matrix, and we have
\[
y^\infty(x)=R\left(\sqrt{1-\Delta^2}x+\Delta e_3 |x|\right)+b\,.
\]
It remains to show that $y^\infty=y^*$.
To see this, note that $y^\infty\circ\iota=Y_\infty$ satisfies
\[
\left\{(0,x]:x\in \partial B_{1,\Delta}\cap \partial B_1\right\} =\Lb_{Y_\infty}^{(g)}=\Lb_{Y}^{(g)}\,,
\]
where the second equality holds by Lemma \ref{lem:flattening}.
This implies that for every $x\in
B_{1,\Delta}$ there exists an $L\in \Lb^{(g)}_Y$ with  $x\in L$. This in turn
implies that $\Lb_Y^{(b)}=\emptyset$ (since the line segments in $\Lb_Y$ are
pairwise disjoint). By Lemma \ref{lem:flattening}, the latter
yields $Y=Y_\infty$. Composing with $j$ on both sides of this last equation, we
obtain $y^*=y^\infty$. This completes the proof of the theorem.
\end{proof}


\bibliographystyle{plain}
\bibliography{regular}

\begin{thebibliography}{10}

\bibitem{MR3275221}
P.~Bella and R.~V. Kohn.
\newblock Metric-induced wrinkling of a thin elastic sheet.
\newblock {\em J. Nonlinear Sci.}, 24(6):1147--1176, 2014.

\bibitem{bella2014wrinkles}
P.~Bella and R.~V. Kohn.
\newblock Wrinkles as the result of compressive stresses in an annular thin
  film.
\newblock {\em Comm. Pure Appl. Math.}, 67(5):693--747, 2014.

\bibitem{MR1447150}
M.~Ben~Amar and Y.~Pomeau.
\newblock Crumpled paper.
\newblock {\em Proc. Roy. Soc. London Ser. A}, 453(1959):729--755, 1997.

\bibitem{MR1921161}
H.~Ben~Belgacem, S.~Conti, A.~DeSimone, and S.~M{\"u}ller.
\newblock Energy scaling of compressed elastic films---three-dimensional
  elasticity and reduced theories.
\newblock {\em Arch. Ration. Mech. Anal.}, 164(1):1--37, 2002.

\bibitem{MR3102597}
J.~Brandman, R.~V. Kohn, and H.-M. Nguyen.
\newblock Energy scaling laws for conically constrained thin elastic sheets.
\newblock {\em J. Elasticity}, 113(2):251--264, 2013.

\bibitem{CCMM}
E.~Cerda, S.~Chaieb, F.~Melo, and L.~Mahadevan.
\newblock Conical dislocations in crumpling.
\newblock {\em Nature}, 401:46--49, 1999.

\bibitem{PhysRevLett.80.2358}
E.~Cerda and L.~Mahadevan.
\newblock Conical surfaces and crescent singularities in crumpled sheets.
\newblock {\em Phys. Rev. Lett.}, 80:2358--2361, Mar 1998.

\bibitem{Cerda08032005}
E.~Cerda and L.~Mahadevan.
\newblock Confined developable elastic surfaces: cylinders, cones and the
  elastica.
\newblock {\em Proceedings of the Royal Society A: Mathematical, Physical and
  Engineering Science}, 461(2055):671--700, 2005.

\bibitem{constantin1994onsager}
P.~Constantin, W.~E, and E.~S. Titi.
\newblock Onsager's conjecture on the energy conservation for solutions of
  euler's equation.
\newblock {\em Comm. Math. Phys.}, 165(1):207--209, 1994.

\bibitem{MR2358334}
S.~Conti and F.~Maggi.
\newblock Confining thin elastic sheets and folding paper.
\newblock {\em Arch. Ration. Mech. Anal.}, 187(1):1--48, 2008.

\bibitem{PhysRevLett.87.206105}
B.~A. DiDonna and T.~A. Witten.
\newblock Anomalous strength of membranes with elastic ridges.
\newblock {\em Phys. Rev. Lett.}, 87:206105, Oct 2001.

\bibitem{MR1916989}
G.~Friesecke, R.~D. James, and S.~M{\"u}ller.
\newblock A theorem on geometric rigidity and the derivation of nonlinear plate
  theory from three-dimensional elasticity.
\newblock {\em Comm. Pure Appl. Math.}, 55(11):1461--1506, 2002.

\bibitem{MR2210909}
G.~Friesecke, R.~D. James, and S.~M{\"u}ller.
\newblock A hierarchy of plate models derived from nonlinear elasticity by
  {G}amma-convergence.
\newblock {\em Arch. Ration. Mech. Anal.}, 180(2):183--236, 2006.

\bibitem{MR864505}
M.~Gromov.
\newblock {\em Partial differential relations}, volume~9 of {\em Ergebnisse der
  Mathematik und ihrer Grenzgebiete (3) [Results in Mathematics and Related
  Areas (3)]}.
\newblock Springer-Verlag, Berlin, 1986.

\bibitem{hornung2011approximation}
P.~Hornung.
\newblock Approximation of flat {$W^{2,2}$} isometric immersions by smooth
  ones.
\newblock {\em Arch. Ration. Mech. Anal.}, 199(3):1015--1067, 2011.

\bibitem{MR2771671}
P.~Hornung.
\newblock Fine level set structure of flat isometric immersions.
\newblock {\em Arch. Ration. Mech. Anal.}, 199(3):943--1014, 2011.

\bibitem{isett2016proof}
P.~Isett.
\newblock A proof of {O}nsager's conjecture.
\newblock {\em arXiv preprint arXiv:1608.08301}, 2016.

\bibitem{kohn2013analysis}
R.~V. Kohn and H.-M. Nguyen.
\newblock Analysis of a compressed thin film bonded to a compliant substrate:
  the energy scaling law.
\newblock {\em J. Nonlinear Sci.}, 23(3):343--362, 2013.

\bibitem{PhysRevLett.78.1303}
E.~M. Kramer and T.~A. Witten.
\newblock Stress condensation in crushed elastic manifolds.
\newblock {\em Phys. Rev. Lett.}, 78:1303--1306, Feb 1997.

\bibitem{MR0075640}
N.~H. Kuiper.
\newblock On {$C^1$}-isometric imbeddings. {I}, {II}.
\newblock {\em Nederl. Akad. Wetensch. Proc. Ser. A. 58, Indag. Math.},
  17:545--556, 683--689, 1955.

\bibitem{MR2731157}
M.~Lewicka, M.~G. Mora, and M.~R. Pakzad.
\newblock Shell theories arising as low energy {$\Gamma$}-limit of 3d nonlinear
  elasticity.
\newblock {\em Ann. Sc. Norm. Super. Pisa Cl. Sci. (5)}, 9(2):253--295, 2010.

\bibitem{lewicka2015convex}
M.~Lewicka and M.~R. Pakzad.
\newblock Convex integration for the {M}onge-{A}mp{\`e}re equation in two
  dimensions.
\newblock {\em ArXiv e-prints}, August 2015.

\bibitem{lidmar2003virus}
J.~Lidmar, L.~Mirny, and D.~R. Nelson.
\newblock Virus shapes and buckling transitions in spherical shells.
\newblock {\em Physical Review E}, 68(5):051910, 2003.

\bibitem{Lobkovsky01121995}
A.~Lobkovsky, S.~Gentges, H.~Li, D.~Morse, and T.~A. Witten.
\newblock Scaling properties of stretching ridges in a crumpled elastic sheet.
\newblock {\em Science}, 270(5241):1482--1485, 1995.

\bibitem{1996PhRvE..53.3750L}
A.~E. {Lobkovsky}.
\newblock {Boundary layer analysis of the ridge singularity in a thin plate}.
\newblock {\em Phys. Rev. E}, 53:3750--3759, April 1996.

\bibitem{1997PhRvE..55.1577L}
A.~E. {Lobkovsky} and T.~A. {Witten}.
\newblock {Properties of ridges in elastic membranes}.
\newblock {\em Phys. Rev. E}, 55:1577--1589, February 1997.

\bibitem{MR1078998}
S.~M{\"u}ller.
\newblock Higher integrability of determinants and weak convergence in {$L^1$}.
\newblock {\em J. Reine Angew. Math.}, 412:20--34, 1990.

\bibitem{MR3148079}
S.~M{\"u}ller and H.~Olbermann.
\newblock Almost conical deformations of thin sheets with rotational symmetry.
\newblock {\em SIAM J. Math. Anal.}, 46(1):25--44, 2014.

\bibitem{MR3168627}
S.~M{\"u}ller and H.~Olbermann.
\newblock Conical singularities in thin elastic sheets.
\newblock {\em Calc. Var. Partial Differential Equations}, 49(3-4):1177--1186,
  2014.

\bibitem{MR0065993}
J.~Nash.
\newblock {$C^1$} isometric imbeddings.
\newblock {\em Ann. of Math. (2)}, 60:383--396, 1954.

\bibitem{HOregular}
H.~Olbermann.
\newblock Energy scaling law for the regular cone.
\newblock {\em J. Nonlinear Sci.}, 26:287--314, 2016.

\bibitem{2015arXiv150907378O}
H.~{Olbermann}.
\newblock {Energy scaling law for a single disclination in a thin elastic
  sheet}.
\newblock {\em Arch. Rat. Mech. Anal.}, 2017.
\newblock DOI 10.1007/s00205-017-1093-4.

\bibitem{MR2128713}
M.~R. Pakzad.
\newblock On the {S}obolev space of isometric immersions.
\newblock {\em J. Differential Geom.}, 66(1):47--69, 2004.

\bibitem{MR0346714}
A.~V. Pogorelov.
\newblock {\em Extrinsic geometry of convex surfaces}.
\newblock American Mathematical Society, Providence, R.I., 1973.
\newblock Translated from the Russian by Israel Program for Scientific
  Translations, Translations of Mathematical Monographs, Vol. 35.

\bibitem{MR2033874}
A.~E. Romanov.
\newblock Mechanics and physics of disclinations in solids.
\newblock {\em Eur. J. Mech. A Solids}, 22(5):727--741, 2003.
\newblock 5th EUROMECH Solid Mechanics Conference (Thessaloniki, 2003).

\bibitem{PhysRevA.38.1005}
H.~S. Seung and David~R. Nelson.
\newblock Defects in flexible membranes with crystalline order.
\newblock {\em Phys. Rev. A}, 38:1005--1018, Jul 1988.

\bibitem{MR2023444}
S.~C. Venkataramani.
\newblock Lower bounds for the energy in a crumpled elastic sheet---a minimal
  ridge.
\newblock {\em Nonlinearity}, 17(1):301--312, 2004.

\bibitem{RevModPhys.79.643}
T.~A. Witten.
\newblock Stress focusing in elastic sheets.
\newblock {\em Rev. Mod. Phys.}, 79:643--675, Apr 2007.

\bibitem{MR3179439}
A.~Yavari and A.~Goriely.
\newblock Riemann-{C}artan geometry of nonlinear disclination mechanics.
\newblock {\em Math. Mech. Solids}, 18(1):91--102, 2013.

\end{thebibliography}

\end{document}